\newtheorem{thm}{Theorem}
\newtheorem{lem}[thm]{Lemma}
\theoremstyle{remark}
\newtheorem{rem}[thm]{Remark}
\theoremstyle{definition}
\newcommand{\col}{\kern -3pt :}
\newcommand{\C}{\mathbb C}
\newcommand{\N}{\mathbb N}
\newcommand{\Id}{\mathrm{Id}}
\newcommand{\End}{\mathrm{End}}
\newcommand{\SSS}{\mathcal S^A}
\newcommand{\RR}{\mathcal R}
\newcommand{\SL}{\mathrm{SL}_2}
\newcommand{\Tr}{\mathrm{Tr}}
\newcommand{\E}{\mathrm{e}}
\newcommand{\I}{\mathrm{i}}
\newcommand{\ZWRT}{Z^A_{\mathrm{WRT}}}
\newcommand{\db}{/\kern -4pt/}
\renewcommand{\leq}{\leqslant}
\renewcommand{\geq}{\geqslant}
\renewcommand{\phi}{\varphi}
\renewcommand{\epsilon}{\varepsilon}
\title[The Witten-Reshetikhin-Turaev representation]
{The Witten-Reshetikhin-Turaev\\ representation of the \\Kauffman skein algebra}
\author{Francis Bonahon}
\address {Department
of Mathematics,  University of
Southern California, Los Angeles
CA~90089-2532, U.S.A.}
\email{fbonahon@math.usc.edu}
\author{Helen Wong}
\address{Department
of Mathematics, Carleton College, Northfield MN 55057, U.S.A.}
\email{hwong@carleton.edu}
\thanks{This research was partially supported by grants DMS-0604866, DMS-1105402 and DMS-1105692  from the National Science Foundation, and by a mentoring grant from the Association for Women in Mathematics.}
\date{\today}
\subjclass[2010]{57M27, 57R56}
\begin{document}

\commby{XXX}

\begin{abstract}
For $A$ a primitive $2N$--root of unity with $N$ odd, the Witten-Reshetikhin-Turaev topological quantum field  theory provides a representation of the Kauffman skein algebra of a closed surface. We show that this representation is irreducible, and we compute its classical shadow in the sense of~\cite{BonWonSkeinReps1}. 
\end{abstract}
\maketitle

The discovery of the Jones polynomial \cite{Jones}, and the simplification of its construction by Kauffman \cite{Kauffman}, were quickly followed by two originally unrelated developments. The first one was the introduction of the Kauffman skein module of an oriented 3--manifold  by Turaev \cite{TuraevSkein} and Przytycki \cite{PrzytyckiSkein}. A special case leads to the Kauffman skein algebra $\SSS(S)$ of an oriented surface $S$ which, when $S$ is connected,  was later interpreted as a quantization of the character  variety $\mathcal R_{\SL(\C)}(S)$, consisting of the characters of all group homomorphisms $\pi_1(S) \to \SL(\C)$ \cite{TuraevPoisson, BFKBKauffmanSkeinModule, BFKBKauffmanSkeinObservable, PrzSik}. 

Another development was Witten's interpretation \cite{Witten} of the Jones polynomial within the framework of  a topological quantum field theory. This topological quantum field theory point of view was formalized in mathematical terms by Reshetikhin-Turaev \cite{ReshetikhinTuraev1, ReshetikhinTuraev2}. In particular, the Witten-Reshetikhin-Turaev topological quantum field theory leads, for every primitive $2N$--root of unity $A$, to a representation $\rho \colon \SSS(S) \to \End(V_S)$ of the skein algebra corresponding to this parameter $A$.  

The first  result of this article is the following.

\begin{thm}
\label{thm:WRTirreducibleIntro}
Let $S$ be a connected closed oriented surface. For every primitive $2N$--root of unity $A$, the Witten-Reshetikhin-Turaev  representation $\rho \colon \SSS(S) \to \End(V_S)$ is irreducible. 
\end{thm}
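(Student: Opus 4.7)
My plan is to use the standard pants-decomposition basis of $V_S$ from the Reshetikhin-Turaev construction and to verify irreducibility directly by analyzing the action of pants curves and their duals. First I fix a pants decomposition of $S$ by simple closed curves $\gamma_1, \ldots, \gamma_{3g-3}$, where $g$ is the genus of $S$. The Reshetikhin-Turaev TQFT equips $V_S$ with a canonical basis $\{v_c\}$ indexed by admissible colorings $c \col \{\gamma_1, \ldots, \gamma_{3g-3}\} \to \{0, 1, \ldots, N-2\}$, where admissibility means the three colors meeting at each pair of pants satisfy the quantum Clebsch-Gordan triangle and parity conditions.

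Each $\gamma_i \in \SSS(S)$ then acts diagonally in this basis, with eigenvalue depending only on $c(\gamma_i)$ via a standard Kauffman-bracket evaluation of the colored unknot. Two different colors $a, b \in \{0, \ldots, N-2\}$ can yield the same eigenvalue only when $a + b = N-2$. Because $N$ is odd, $N - 2$ is odd, so any such ambiguity at an edge $\gamma_i$ flips the parity of $c(\gamma_i)$; a short combinatorial argument using the parity constraints at the two adjacent pants vertices then shows that the joint eigenvalues of $\rho(\gamma_1), \ldots, \rho(\gamma_{3g-3})$ separate the basis $\{v_c\}$. Consequently any $\rho(\SSS(S))$-invariant subspace must be spanned by some subset of the $v_c$.

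For each $i$ I would then introduce a dual simple closed curve $\delta_i$ intersecting $\gamma_i$ once and disjoint from the other $\gamma_j$. The matrix coefficients of $\rho(\delta_i)$ in the basis $\{v_c\}$ can be expressed via quantum $6j$-symbols, and the crucial claim is that $\langle v_{c'}, \rho(\delta_i) v_c \rangle \neq 0$ whenever $c'$ and $c$ differ by $\pm 1$ at the edge $\gamma_i$ and agree elsewhere (both being admissible). Combined with the diagonal analysis, this implies that any $\rho(\SSS(S))$-invariant subspace is the span of a subset of admissible colorings closed under the adjacency $c \leftrightarrow c \pm \epsilon_i$; since the admissibility graph is connected, the subspace is $0$ or all of $V_S$.

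The hard part is the ladder step: one must explicitly evaluate the relevant quantum $6j$-symbols at the root of unity $A$ and verify nonvanishing, which reduces to showing that all quantum integers $[k]_A$ for $1 \leq k \leq N-1$ are nonzero. This is exactly where the assumption that $A$ is primitive of order $2N$ enters. A secondary delicate point is the separation argument in the diagonal step, where the oddness of $N$ is crucial in preventing a globally consistent ``color-flipping'' symmetry that would collapse the joint spectrum of the $\rho(\gamma_i)$.
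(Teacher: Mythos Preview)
Your overall strategy---diagonalize via pants curves, then connect basis vectors using curves transverse to them---is close to the paper's, but the implementation has a real gap. The claim that for each $\gamma_i$ one can find a simple closed curve $\delta_i$ meeting $\gamma_i$ exactly once and disjoint from all other $\gamma_j$ is false in general: if $\gamma_i$ is adjacent to two \emph{distinct} pairs of pants then, after cutting along the other $\gamma_j$, the curve $\gamma_i$ becomes separating in the resulting four-holed sphere, so any closed curve confined to that subsurface meets $\gamma_i$ an even number of times. Since the trivalent dual graph of a pants decomposition has $2g-2$ vertices of odd degree, not all of its $3g-3$ edges can be loops, and this situation is unavoidable for $g\geq 2$. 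With $\delta_i$ meeting $\gamma_i$ twice, the ladder step becomes a $\{0,\pm 2\}$ move rather than $\pm 1$; the relevant $6j$-symbols are different, and for $N$ even the induced graph on colorings is then disconnected by parity. Your separation step is also written only for $N$ odd; the theorem covers all $N$, and for $N$ even the correct label set in this normalization is $\{0,1,\dots,\tfrac N2-2\}$ rather than $\{0,\dots,N-2\}$, in which range the eigenvalues $-(A^{2(i+1)}+A^{-2(i+1)})$ are already pairwise distinct with no parity argument needed.

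The paper sidesteps both issues by never invoking dual curves. After the diagonal step (carried out for both parities of $N$), it changes the trivalent spine by Flip (Whitehead) moves: the basis vector $b_w$ for the old spine expands in the basis for the flipped spine with $6j$-symbol coefficients, and the coefficient attached to the \emph{minimal} admissible weight on the new edge is a single nonzero product of quantum integers. Iterating via a lexicographic complexity on weight systems drives any nonzero invariant subspace down to the vacuum vector $b_0$; surjectivity of $\Phi_H\colon \SSS(H)\to V_S$ then shows the subspace is all of $V_S$. This replaces your ``connectivity of the admissibility graph'' claim by an explicit descent and avoids the existence problem for $\delta_i$ altogether.
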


This result can be compared with Roberts's proof \cite{RobertsIrred} that, when $N=2p$ with $p$ prime, the action of the mapping class group of $S$ on the  Witten-Reshetikhin-Turaev  space $V_S$ is irreducible. In this special case, Theorem~\ref{thm:WRTirreducibleIntro} can actually be deduced from some of the proofs of \cite{RobertsIrred}.

Our interest in the irreducibility of the Witten-Reshetikhin-Turaev  representation is motivated by  \cite{BonWonJacoFest, BonWonSkeinReps1, BonWonSkeinReps2, BonWonSkeinReps3}, where we  initiated the systematic study of finite-dimensional irreducible representations of the skein algebra $\SSS(S)$. In particular, when $A$ is a $2N$--root of unity with $N$ odd, we associate to such an irreducible  representation $\rho \colon \SSS(S) \to \End(V)$ an element $r_\rho$ of the character variety $\mathcal R_{\SL(\C)}(S)$; see Theorem~\ref{thm:ClassicShadow} for a precise statement. If we regard $\SSS(S)$ as a quantization of $\mathcal R_{\SL(\C)}(S)$ and a representation $\rho$ as a  point of this quantization,  the character $r_\rho\in \mathcal R_{\SL(\C)}(S)$ is the \emph{classical shadow} of $\rho$. 
With irreducibility of the Witten-Reshetikhin-Turaev  representation established, we thus may  inquire about the classical shadow of the best known representation of the skein algebra. 

The Witten-Reshetikhin-Turaev topological quantum field  theory, and the associated representation of $\SSS(S)$, have slightly different features according to whether $N$ is even or odd, respectively known as the $\mathrm{SU}_2$ and $\mathrm{SO}_3$ cases. The classical shadow of a representation is defined only when $N$ is odd.

\begin{thm}
\label{thm:WRTshadowIntro}
When $A$ is a primitive $2N$--root of unity with $N$ odd, the classical shadow of the Witten-Reshetikhin-Turaev  representation $\rho \colon \SSS(S) \to \End(V_S)$ is the trivial character $\iota \in \mathcal R_{\SL(\C)}(S)$, represented by the trivial homomorphism $\pi_1(S) \to \SL(\C)$. 
\end{thm}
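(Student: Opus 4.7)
The plan is to establish the identity $\rho(T_N(\gamma))=-2\cdot\Id_{V_S}$ for every simple closed curve $\gamma\subset S$, from which the theorem follows. Indeed, when $N$ is odd the Chebyshev-Frobenius threading sends each such $\gamma$ to a central element $T_N(\gamma)\in\SSS(S)$; combined with Theorem~\ref{thm:WRTirreducibleIntro}, Schur's lemma forces $\rho(T_N(\gamma))=\lambda_\gamma\cdot\Id$ for some scalar $\lambda_\gamma\in\C$, and the defining property of the classical shadow recalled in Theorem~\ref{thm:ClassicShadow} identifies $\lambda_\gamma=-\Tr(r_\rho(\gamma))$. Since the coordinate ring of $\mathcal R_{\SL(\C)}(S)$ is generated by the trace functions of simple closed curves, knowing $\lambda_\gamma=-2$ for every such $\gamma$ yields $\Tr(r_\rho(\gamma))=2$ uniformly, which pins down $r_\rho=\iota$.

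To compute $\lambda_\gamma$, extend $\gamma$ to a pants decomposition $\mathcal P=\{\gamma_1=\gamma,\gamma_2,\dots\}$ of $S$. The WRT construction supplies a standard basis of $V_S$ indexed by $\mathcal P$-admissible colorings, with colors drawn from $\{0,1,\dots,N-2\}$ subject to the $\mathrm{SO}_3$ fusion rules at each pair of pants (since $N$ is odd). In this basis each $\rho(\gamma_i)$ is diagonal, and unpacking the Kauffman-bracket skein description of the WRT action from the handlebody side shows that the eigenvalue on the basis vector whose color at $\gamma_i$ is $c$ equals $-(A^{2c+2}+A^{-2c-2})$; this is the Kauffman-bracket value of the Hopf-link diagram with one component colored by the fundamental $2$-dimensional module and the other by the $(c{+}1)$-dimensional simple module.

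Applying the Chebyshev polynomial $T_N$ to this eigenvalue, via the identity $T_N\bigl(-(z+z^{-1})\bigr)=(-1)^N(z^N+z^{-N})$ with $z=A^{2c+2}$, yields $(-1)^N\bigl(A^{(2c+2)N}+A^{-(2c+2)N}\bigr)$. Because $A^{2N}=1$, one has $A^{\pm(2c+2)N}=1$; because $N$ is odd, $(-1)^N=-1$. Every eigenvalue therefore collapses to $-2$, giving $\rho(T_N(\gamma))=-2\cdot\Id$ and hence $\lambda_\gamma=-2$.

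The main obstacle will be establishing the eigenvalue formula $-(A^{2c+2}+A^{-2c-2})$ cleanly: this requires carefully unfolding the WRT construction of $V_S$ from the Kauffman-bracket skein module of a handlebody bounding $S$, and reconciling the sign and framing conventions of~\cite{BonWonSkeinReps1} (used to define the Chebyshev-Frobenius threading and the classical shadow) with those inherent to the WRT surgery description. Once this eigenvalue formula is in hand, the remainder is an elementary cyclotomic calculation, and it is precisely the sign in $-(z+z^{-1})$ together with the parity of $N$ that delivers the value $-2$, matching the trace of $\mathrm{Id}\in\SL(\C)$ and thus identifying $r_\rho$ with the trivial character $\iota\in\mathcal R_{\SL(\C)}(S)$ rather than with some other central character.
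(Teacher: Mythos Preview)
Your argument is correct and takes a genuinely different route from the paper's. The paper never diagonalizes $\rho(\gamma)$ or invokes irreducibility; instead it rewrites $T_N(x)=xS_{N-1}(x)-2S_{N-2}(x)$ in terms of second-kind Chebyshev polynomials and then appeals to two skein-theoretic facts from \cite{BHMV3man} (stated here as Lemma~\ref{lem:ChebyRelations}): that threading $S_{N-1}$ along any knot kills the image in $V_S$, and that $S_{N-2}$ and $S_0$ have the same effect. This immediately gives $\rho([K^{T_N}])(v)=-2v$ for every $v=\Phi_M([L])$, hence for all of $V_S$, and works uniformly for every framed knot $K$ without choosing a pants decomposition. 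Your approach instead leverages the eigenvalue formula $-(A^{2(c+1)}+A^{-2(c+1)})$ for $\rho(\gamma)$ in the pants-decomposition basis (which the paper in fact establishes and uses in the proof of Lemma~\ref{lem:InvSpaceContainsBasis}, citing \cite[Lemma~14.2]{LickorishBook}), and then reduces everything to the cyclotomic identity $T_N\bigl(-(z+z^{-1})\bigr)=(-1)^N(z^N+z^{-N})$ with $z=A^{2(c+1)}$. This is more hands-on and avoids the $S_{N-1}$/$S_{N-2}$ lemma, at the cost of restricting to simple closed curves and then arguing that these determine the character. Note, incidentally, that your appeal to Schur's lemma is redundant: once $\rho(\gamma)$ is diagonal in the chosen basis, so is $\rho(T_N(\gamma))$, and you compute every diagonal entry to be $-2$ directly.
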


The proof of Theorem~\ref{thm:WRTshadowIntro} is relatively simple, although it uses deep connections in the quantum theory with both types of  Chebyshev polynomials. The Chebyshev polynomial  of the first type, $T_N(x)$, is used to define the classical shadow of a representation of $\SSS(S)$. On the other hand, the second type of Chebyshev polynomials, $S_n(x)$,  classically plays an important r\^ole in the representation theory of the quantum group $\mathrm U_q(\mathfrak{sl}_2)$ and in the  Witten-Reshetikhin-Turaev topological quantum field theory.  We are ultimately led to Theorem~\ref{thm:WRTshadowIntro} by exploiting these relationships and making use of some elementary relations between the two types. 

In \cite{BonWonSkeinReps3} we construct, for every character $r\in\mathcal R_{\SL(\C)}(S)$, an irreducible representation $\rho \colon \SSS(S) \to \End(V_S)$ whose classical shadow $r_\rho$ is equal to $r$. In particular, this associates another irreducible representation $\rho_\iota$ to the trivial character $\iota$.   The construction of  \cite{BonWonSkeinReps3} is unfortunately not very explicit in the special case of the trivial character $\iota$; however, it appears that the representation that it provides has very different features from those of the Witten-Reshetikhin-Turaev representation.

\section{The Kauffman skein module}

The \emph{Kauffman skein module} $\SSS(M)$  of an oriented 3--dimensional manifold $M$ depends on a parameter $A=\E^{\pi\I \hbar}\in \C-\{0\}$, and is defined as follows: one first considers the vector space freely generated by  all isotopy classes of framed links in the thickened surface $S \times [0,1]$, and then one takes the quotient of this space by two relations:
\begin{itemize}
\item the first relation is the  \emph{skein relation} that
$
[L_1] = A^{-1} [L_0] + A [L_\infty]
$
whenever the three links $L_1$, $L_0$ and $L_\infty\subset S\times [0,1]$ differ only in a little ball where they are as represented on Figure~\ref{fig:SkeinRelation};

\item the second relation states that $[L\cup O] = -(A^2 +A^{-2})[L]$ whenever the knot $O$ is the boundary of a disk $D$ endowed with framing transverse to $D$, and the framed link $L$ is disjoint from the disk $D$.

\end{itemize}

\begin{figure}[htbp]

\SetLabels
( .5 * -.4 ) $L_0$ \\
( .1 * -.4 )  $L_1$\\
(  .9*  -.4) $L_\infty$ \\
\endSetLabels
\centerline{\AffixLabels{\includegraphics{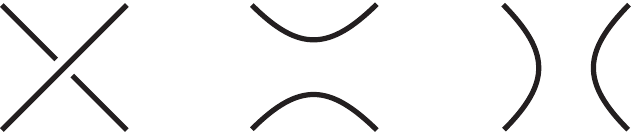}}}
\vskip 15pt
\caption{A Kauffman triple}
\label{fig:SkeinRelation}
\end{figure}

In the special case where $M=S\times [0,1]$ for an oriented surface $S$, we write $\SSS(S) = \SSS \bigl (S\times [0,1]\bigr)$ and we note that this module now comes with a natural multiplication. Indeed, if $[L_1]$, $[L_2]\in \SSS(S)$ are respectively represented by the framed links $L_1$, $L_2$, we can consider their superposition
$$
[L_1] \cdot [L_2] = [L_1' \cup L_2'] \in \SSS(S)
$$
 represented by the union of the framed link $L_1'\subset S\times[0, \frac12]$ obtained by rescaling $L_1 \subset S\times [0,1]$ and of the framed link $L_2'\subset S \times [\frac12, 1]$ obtained by rescaling $L_2 \subset S\times [0,1]$. This endows the skein module $\SSS(S)$ with the structure of an algebra, called the  \emph{Kauffman skein algebra} of the oriented surface $S$.

\section{The Witten-Reshetikhin-Turaev topological quantum field theory}
\label{sec:WRTtqft}

We briefly review a few fundamental properties of the Witten-Reshetikhin-Turaev   topological quantum field theory, and refer to
\cite{BHMV3man, BHMVTQFT, LickorishSkein3manInvs, LickorishBook, LickorishHandbook,TuraevBook} for  details and proofs.

The Witten-Reshetikhin-Turaev   topological quantum field theory $\ZWRT$ depends on the choice of a primitive $2N$--root of unity $A$, and is defined over the category $\mathcal C$ defined as follows: the objects of $\mathcal C$ are closed oriented surfaces $S$; the morphisms from $S_1$ to $S_2$ are pairs $(M,L)$ where $M$ is a compact oriented  3--manifold with $\partial M = (-S_1) \sqcup S_2$, where $L$ is a framed link in the interior of $M$, and where $M$ is endowed with a $p_1$--structure. The precise definition of a $p_1$--structure can be found in \cite[App.~B]{BHMVTQFT} but, for the purpose of the current article, it suffices to know that it captures  certain homotopic information on the tangent bundle of the manifolds considered.

In particular, $\ZWRT$ associates a finite-dimensional vector space $V_S= \ZWRT(S)$  to each closed oriented surface $S$, and a linear map $\ZWRT{(M,L)} \colon V_{S_1} \to V_{S_2}$ to each morphism $(M,L)$ as above. In addition, the vector space $V_\varnothing$ associated to the empty surface $\varnothing$ comes equipped with a canonical identification with $\C$; in other words, this vector space is 1--dimensional and contains a preferred basis element that we will denote by 1. 

The topological quantum field theory $\ZWRT$ satisfies many  properties, in particular those that characterize topological quantum field theories. Most of these features will play no direct  r\^ole in the current article. However the following fact, which is grounded in properties of the quantum group $\mathrm U_q(\mathfrak{sl}_2)$  underlying the construction of $\ZWRT$, is crucial for our purposes.

\begin{lem}
\label{lem:WRTmorphSkeinRel}
The linear maps $\ZWRT{(M,L)}$ associated to the morphisms of the category $\mathcal C$ satisfy the skein relation that, as linear maps $ V_{S_1} \to V_{S_2}$,
$$
\ZWRT{(M,L_1)} =   A^{-1} \ZWRT{(M,L_0)} + A \, \ZWRT{(M,L_\infty)} 
$$
whenever the framed links $L_1$, $L_0$ and $L_\infty$ form a Kauffman triple in the manifold $M$ with $\partial M = (-S_1) \sqcup S_2$, for a fixed $p_1$--structure on $M$. 

Also, 
$$
\ZWRT(M, L \cup O) = -(A^2 + A^{-2}) \ZWRT(M,L)
$$
whenever the knot $O$ is the boundary of a disk $D$ endowed with framing transverse to $D$, and the framed link $L$ is disjoint from the disk $D$.    \qed
\end{lem}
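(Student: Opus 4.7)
My plan is to deduce both identities directly from the construction of the Witten-Reshetikhin-Turaev theory. I will present the Kauffman-bracket (BHMV) version of the argument since the skein relations are essentially wired into that construction, but the same conclusion falls out of the $\mathrm U_q(\mathfrak{sl}_2)$ approach where each component of $L$ is decorated by the fundamental $2$-dimensional representation and crossings are evaluated by the corresponding $R$-matrix.

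Recall that $\ZWRT(M,L)$ is built as follows. One presents the cobordism $M\col (-S_1)\sqcup S_2$, together with its $p_1$-structure, by a surgery datum: a framed link $L_M$ sitting inside a standard cobordism $M_0$ from $S_1$ to $S_2$ (typically obtained from cylinders and handlebodies). The Kauffman skein element corresponding to $L_M\cup L$ is then expanded along a distinguished basis of the skein module of $M_0$ coming from coloring each component of $L_M$ by the appropriate Jones-Wenzl-style element, and $\ZWRT(M,L)$ is read off as a matrix of Kauffman brackets, up to an overall normalization depending only on $M$ and on the $p_1$-structure (not on $L$).

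The key point is that the two relations of the statement are precisely the defining relations of the Kauffman bracket $\langle \cdot\rangle$: for any three framed links forming a Kauffman triple we have $\langle L_1\rangle = A^{-1}\langle L_0\rangle + A\langle L_\infty\rangle$, and $\langle L\cup O\rangle = -(A^2+A^{-2})\langle L\rangle$ for a framed unknot bounding a disk transverse to its framing. Since the skein moves involved take place inside a small ball in the interior of $M$, they can be performed away from $L_M$ and away from any component used to define the basis of the skein module of $M_0$. Hence each bracket appearing in the expansion of $\ZWRT(M,L_1)$ equals $A^{-1}$ times the corresponding bracket of $L_0$ plus $A$ times the one of $L_\infty$, and similarly an unknot component pulls out a factor of $-(A^2+A^{-2})$. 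The overall normalization factor is the same on both sides because it depends only on $M$ and its $p_1$-structure, which are fixed throughout. Linearly combining, the claimed identities between the linear maps $V_{S_1}\to V_{S_2}$ follow.

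The only real subtlety I expect is bookkeeping: one must check that performing the local skein move inside $M$ does not alter the $p_1$-structure used to compute $\ZWRT$, and that it is compatible with the choice of surgery presentation of $M$ and with the conventions for reading off the matrix of $\ZWRT(M,L)$ from the Kauffman brackets. These are routine verifications, made explicit in the references cited before the lemma, and once they are in place the relations descend from the Kauffman bracket to $\ZWRT$ with no additional work.
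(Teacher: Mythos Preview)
Your sketch is correct and aligns with the BHMV construction that the paper cites; the paper itself gives no proof of this lemma, simply ending the statement with a \qed\ and deferring to the references \cite{BHMV3man, BHMVTQFT, LickorishSkein3manInvs, LickorishBook, LickorishHandbook, TuraevBook} listed at the start of the section. Your account of how the skein relations are baked into the Kauffman-bracket definition of $\ZWRT$ is exactly the content of those references, so there is nothing to compare.
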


A first consequence of Lemma~\ref{lem:WRTmorphSkeinRel} is that the skein algebra $\SSS(S)$ acts on the space $V_S$, by considering the special case $M= S\times [0,1]$. Indeed, for any framed link $L \subset S \times[0,1]$, the pair $(S \times [0,1] ,L)$ can be seen as a morphism from $S$ to $S$, and therefore induces a linear map $\ZWRT(S\times [0,1], L) \colon V_S \to V_S$. 

\begin{lem} \label{Lemma:rho}
\label{lem:WRTrepExists}
There exists a unique algebra homomorphism
$$
\rho\colon \SSS(S) \to \End(V_S)
$$ 
such that, for every framed link $L$ in $S\times [0,1]$,
$$
\rho \bigl( [L] \bigr) = \ZWRT(S\times [0,1], L)
$$
when $S\times [0,1]$ is endowed with the product $p_1$--structure.
\end{lem}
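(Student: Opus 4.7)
The plan is to first define a candidate map on the free vector space generated by isotopy classes of framed links in $S\times[0,1]$, show that it descends through the Kauffman skein relations to give a well-defined linear map on $\SSS(S)$, and then verify that this linear map respects the multiplication coming from superposition of links. Uniqueness is automatic since the classes $[L]$ of framed links span $\SSS(S)$ by definition, so the formula $\rho([L]) = \ZWRT(S\times[0,1],L)$ determines $\rho$ on a generating set.

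For the descent step, I would equip $S\times[0,1]$ with the product $p_1$--structure once and for all, and note that $\ZWRT(S\times[0,1],L)$ depends only on the isotopy class of the framed link $L$ (this is part of the standard axioms of $\ZWRT$ packaged into the category $\mathcal C$). Hence we get a well-defined linear map $\tilde\rho$ from the free vector space on isotopy classes of framed links to $\End(V_S)$. The two defining relations of $\SSS(S)$, namely the Kauffman skein relation $[L_1]=A^{-1}[L_0]+A[L_\infty]$ and the relation $[L\cup O]=-(A^2+A^{-2})[L]$, are exactly what Lemma~\ref{lem:WRTmorphSkeinRel} asserts for $\ZWRT$ in a fixed $3$--manifold with fixed $p_1$--structure. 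Therefore $\tilde\rho$ descends to a linear map $\rho\colon \SSS(S)\to \End(V_S)$.

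Multiplicativity is where the TQFT gluing axiom enters. Given two framed links $L_1,L_2\subset S\times[0,1]$, the product $[L_1]\cdot[L_2]\in\SSS(S)$ is represented, after rescaling, by the disjoint union $L_1'\cup L_2'$ inside $S\times[0,\tfrac12]\cup S\times[\tfrac12,1]=S\times[0,1]$. On the TQFT side, the morphism $(S\times[0,1],L_1'\cup L_2')$ decomposes as the composition of the morphisms $(S\times[\tfrac12,1],L_2')$ and $(S\times[0,\tfrac12],L_1')$ in the category $\mathcal C$, glued along their common boundary $S\times\{\tfrac12\}$. The functoriality of $\ZWRT$, together with the fact that the product $p_1$--structure on $S\times[0,1]$ is compatible with this cutting (it restricts to the product $p_1$--structure on each half, and the two halves glue back to the original product structure), then gives
$$
\ZWRT(S\times[0,1],L_1'\cup L_2')=\ZWRT(S\times[0,1],L_2)\circ \ZWRT(S\times[0,1],L_1),
$$
which is precisely $\rho([L_1])\rho([L_2])$ once one matches the convention for the algebra structure on $\SSS(S)$.

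The main subtlety I expect to encounter is bookkeeping for the $p_1$--structure: one must check that the product $p_1$--structure is preserved under both the rescaling diffeomorphisms $S\times[0,1]\to S\times[0,\tfrac12]$ and $S\times[0,1]\to S\times[\tfrac12,1]$ and under gluing along $S\times\{\tfrac12\}$, so that no anomalous framing factor (of the sort that appears in the Witten-Reshetikhin-Turaev machinery) is introduced. This is a standard verification, but it is the only non-formal point in the argument; once it is settled, the lemma follows by combining the skein-relation content of Lemma~\ref{lem:WRTmorphSkeinRel} with the monoidal functoriality of $\ZWRT$.
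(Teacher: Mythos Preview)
Your proposal is correct and follows essentially the same approach as the paper: use Lemma~\ref{lem:WRTmorphSkeinRel} to show that $L\mapsto \ZWRT(S\times[0,1],L)$ descends to the skein module, and then use functoriality of $\ZWRT$ (superposition corresponds to composition of morphisms in $\mathcal C$) for multiplicativity. The paper's proof is a two-sentence sketch of exactly this argument; your version simply makes explicit the uniqueness remark and the $p_1$--structure bookkeeping that the paper leaves implicit.
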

\begin{proof}
Lemma~\ref{lem:WRTmorphSkeinRel} shows that the rule $L \mapsto \ZWRT(S\times [0,1], L)$ is compatible with the skein relation, and therefore induces a linear map $\rho\colon \SSS(S) \to \End(V_S)$.  The multiplication law of $\SSS(S)$ is defined by the superposition operation, which itself corresponds to the composition of morphisms $(S\times [0,1], L)$ in the category $\mathcal C$. It follows  that $\rho$ is an algebra homomorphism.
\end{proof}

This homomorphism 
$
\rho\colon \SSS(S) \to \End(V_S)
$
is the \emph{Witten-Reshetikhin-Turaev representation} of the skein algebra $\SSS(S)$. 

Another application of Lemma~\ref{lem:WRTmorphSkeinRel} will enable us to perform computations in the space $V_S$. 
Consider a 3-manifold $M$ with boundary $\partial M =S$, endowed with a $p_1$--structure. A framed link $L \subset M$ provides a morphism  $(M,L)$ from the empty surface $\varnothing$ to $S$. This provides a linear map $\ZWRT(M,L)$ from $V_\varnothing=\C$ to $V_S$, and in particular specifies an element $\ZWRT(M,L)(1) \in V_S$. As above, Lemma~\ref{lem:WRTmorphSkeinRel} shows that the map $L \mapsto \ZWRT(M,L)(1)$ defines a linear map $\Phi_M \colon \SSS(M) \to V_S$, from the skein module of $M$ to the space $V_S$. 

\begin{lem} \label{Lemma:Phi}
\label{lem:WRTspaceSpannedBySkeins}
If $M$ is an oriented  $3$--manifold with boundary $\partial M = S$, the linear map $\Phi_M \colon \SSS(M) \to V_S$ defined by $\Phi_M\bigl([L]\bigr)=\ZWRT(M,L)(1)$ is surjective. \qed
\end{lem}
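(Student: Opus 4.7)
The plan is to reduce the surjectivity of $\Phi_M$ to the special case where $M$ is a handlebody, for which the statement is essentially built into the construction of $V_S$. Concretely, I would fix an auxiliary handlebody $H$ with $\partial H = S$. In the BHMV skein-theoretic construction of the Witten--Reshetikhin--Turaev theory \cite{BHMV3man, BHMVTQFT}, the vector space $V_S$ is manufactured as the quotient of the Kauffman skein module $\SSS(H)$ by the kernel of an explicit sesquilinear pairing, and the map $\Phi_H$ is precisely the corresponding quotient map. Hence $\Phi_H$ is already surjective, and it suffices to prove that the image of $\Phi_H$ is contained in the image of $\Phi_M$.

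For this step I would invoke the standard fact from 3--manifold topology that, after fixing the identification $\partial M = \partial H = S$, there is a framed link $K$ in the interior of $M$ whose Dehn surgery transforms $M$ into a manifold diffeomorphic to $H$ rel boundary. Any framed link $L \subset H$ can be isotoped off the cores of the surgery solid tori, so $L$ may be regarded simultaneously as a framed link in $H$ and in $M$. The surgery formula for $\ZWRT$ then supplies an identity of the form
\[
\ZWRT(H, L)(1) \;=\; c \cdot \ZWRT\bigl(M, L \cup K_\omega\bigr)(1)
\]
in $V_S$, where $c$ is a nonzero normalization constant and $K_\omega$ denotes $K$ cabled by the Kirby color $\omega \in \SSS(D^2 \times S^1)$. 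Because $\omega$ is an explicit finite linear combination of cablings of the core curve, the expression $L \cup K_\omega$ represents an honest element of $\SSS(M)$, so the right-hand side equals $\Phi_M(\beta)$ for some $\beta \in \SSS(M)$. Combined with the surjectivity of $\Phi_H$, this gives $\mathrm{Im}(\Phi_H) \subseteq \mathrm{Im}(\Phi_M)$, which completes the proof.

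The main obstacle is the careful bookkeeping of the $p_1$--structures on either side of the surgery: $M$ and $H$ carry chosen $p_1$--structures that must be matched along $S$ and reconciled with the surgery modification in the interior. In the BHMV framework this is handled by absorbing a framing-anomaly factor (involving the signature of the linking matrix of $K$) into the scalar $c$, and is treated routinely in the references cited in Section~\ref{sec:WRTtqft}; once this is settled, the two-step argument above goes through without further difficulty.
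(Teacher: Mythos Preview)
The paper does not actually give a proof of this lemma: it is stated with a terminal \qed and no argument, the authors treating it as a standard fact from the references listed at the start of \S\ref{sec:WRTtqft} (in particular \cite{BHMV3man, BHMVTQFT}). So there is nothing in the paper to compare your argument against line by line.

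That said, what you have written is the correct and standard justification. In the BHMV model, $V_S$ is by construction a quotient of $\SSS(H)$ for a handlebody $H$, so $\Phi_H$ is surjective essentially by definition; and the reduction of a general $M$ to $H$ via integral surgery on a framed link $K\subset M$, combined with the surgery axiom expressing $\ZWRT(H,L)$ in terms of $\ZWRT(M, L\cup K_\omega)$, is exactly how one passes from the special case to the general one. Your identification of the $p_1$--structure bookkeeping as the only delicate point is also accurate, and it is indeed absorbed into the nonzero scalar $c$ in the BHMV framework. In short: your proposal is correct, and is precisely the argument implicit in the references the paper defers to.
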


We will use Lemma~\ref{lem:WRTspaceSpannedBySkeins} in the special case where $S$ is connected and where $M$ is a handlebody $H$ with boundary $\partial H = S$. Choose an identification $H\cong\Sigma \times [0,1]$ of this handlebody with the product of the interval with  a compact oriented surface  $\Sigma$ with  boundary. 
Select also a trivalent spine for $\Sigma$, namely a trivalent graph $\Gamma$ embedded in the interior of  $\Sigma$ such that $\Sigma$ deformation retracts to $\Gamma$.  We will use this data to describe a basis for~$V_S$. 

An \emph{$N$--admissible weight system} assigns to each edge $e$ of $\Gamma$ a non-negative  integer weight $w(e)$ such that the following conditions hold:
\begin{enumerate}

\item at every vertex of $\Gamma$, the weights  of the edges $e_1$, $e_2$, $e_3$ adjacent to this vertex satisfy the triangle inequalities $w(e_1) \leq w(e_2) +w(e_3) $, $w(e_2) \leq w(e_1) +w(e_3) $ and $w(e_3) \leq w(e_1) +w(e_2) $;

\item if $N$ is odd, the weight $w(e)$ of each edge $e$ is even and bounded by $N-2$; in addition, at each vertex, the sum of the weights of the adjacent edges is bounded by $2N-4$;

\item if $N$ is even, the weight $w(e)$ of each edge $e$ is bounded by $\frac N2 -2$; in addition, at each vertex, the sum of the weights of the adjacent edges is even and bounded by $N-4$;

\end{enumerate}
Let $\mathcal W_\Gamma$ denote the (finite) set of all $N$--admissible weight systems for $\Gamma$.

An $N$--admissible edge weight system $w\in \mathcal W_\Gamma$ specifies  an element $\beta_w$ of the skein module $\SSS(H)$, by replacing  each edge $e$ of $\Gamma$ weighted by $w(e) \leq N-2$ by a copy of the $w(e)$--th Jones-Wenzl  idempotent, represented by a box \SetLabels
\E( .5* .5) $w(e) $ \\
( * ) $ $ \\
( * ) $ $ \\
( * ) $ $ \\
( * ) $ $ \\
\endSetLabels
\raisebox{-5pt}{\AffixLabels{\includegraphics{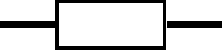}}}.
 The precise definition of the element $\beta_w\in \SSS(H)$ associated to the weight system $w\in \mathcal W_\Gamma$ can be found for instance in \cite{LickorishBook} or \cite{TuraevBook}, but we can give a flavor of the construction. For $a\geq 0$ such that $A^{4k}\neq 1$ for every $k$ with $0<k<a$ (namely, for $0\leq a <N$ is $N$ is odd, and for $0 \leq a<  \frac N2$ if $N$ is even), the  Jones-Wenzl  idempotent  \SetLabels
\E( .5* .5) $a $ \\
\endSetLabels
\raisebox{-5pt}{\AffixLabels{\includegraphics{JW.pdf}}} 
is a certain formal linear combination of families of disjoint arcs, each with $a$ strands emanating from each end of the box lying on $\Sigma$. For instance,

\begin{align*}\textstyle
\SetLabels
\E( .5* .5) 4 \\
\endSetLabels
\raisebox{-5pt}{\AffixLabels{\includegraphics{JW.pdf}}}
&=\textstyle
\raisebox{-7pt}{\includegraphics{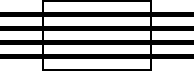}}
+
\frac{A^4+A^{-2}}{A^4+A^{-4}}\,
\raisebox{-7pt}{\includegraphics{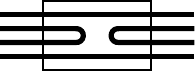}}
+
\frac{A^4+1+A^{-4}}{A^6+A^2+A^{-2}+A^{-6}}\,
\raisebox{14pt}{\rotatebox{180}{\includegraphics{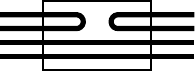}}}\\
&\quad\textstyle
+
\frac{A^4+1+A^{-4}}{A^6+A^2+A^{-2}+A^{-6}}\,
\raisebox{-7pt}{\includegraphics{JW41.pdf}}
+
\frac1{A^4+A^{-4}}\,
\raisebox{-7pt}{\includegraphics{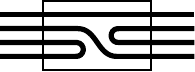}}\\
&\quad\textstyle
+
\frac1{A^4+A^{-4}}\,
\raisebox{-7pt}{\reflectbox{\includegraphics{JW43.pdf}}}
+
\frac1{A^4+A^{-4}}\,
\raisebox{14pt}{\rotatebox{180}{\includegraphics{JW43.pdf}}}
+
\frac1{A^4+A^{-4}}\,
\raisebox{14pt}{\rotatebox{180}{\reflectbox{\includegraphics{JW43.pdf}}}}\\
&\quad\textstyle
+
\frac1{A^6+A^2+A^{-2}+A^{-6}}\,
\raisebox{-7pt}{\includegraphics{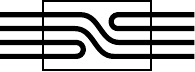}}
+
\frac1{A^6+A^2+A^{-2}+A^{-6}}\,
\raisebox{-7pt}{\reflectbox{\includegraphics{JW46.pdf}}}\\
&\quad\textstyle
+
\frac{A^2+A^{-2}}{(A^4+A^{-4})(A^4+1+A^{-4})}\,
\raisebox{-7pt}{\includegraphics{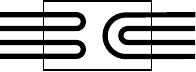}}
+
\frac{A^2+A^{-2}}{(A^4+A^{-4})(A^4+1+A^{-4})}\,
\raisebox{-7pt}{\reflectbox{\includegraphics{JW44.pdf}}}\\
&\quad\textstyle
+
\frac1{(A^4+A^{-4})(A^4+1+A^{-4})}\,
\raisebox{-7pt}{\includegraphics{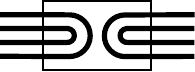}}
+
\frac{(A^2+A^{-2})^2}{(A^4+A^{-4})(A^4+1+A^{-4})}\,
\raisebox{-7pt}{\includegraphics{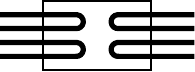}}
\end{align*}

\noindent These strands are then connected by disjoint arcs near the vertices of $\Gamma$, using the fact that at each vertex the $w(e)$  add up to an even number and satisfy the triangle inequalities. See Figure~\ref{fig:vertex}, where $w(e_1) =6$, $w(e_2)=4$ and $w(e_3) =4$. 

\begin{figure}[htbp]

\SetLabels
\E( .26* .5) $w(e_1) $ \\
( .84* .67) \rotatebox{60}{$w(e_2) $} \\
(.84 * .31)\rotatebox{-60}{$w(e_3) $} \\
( * ) $ $ \\
( * ) $ $ \\
\endSetLabels
\centerline{\AffixLabels{\includegraphics{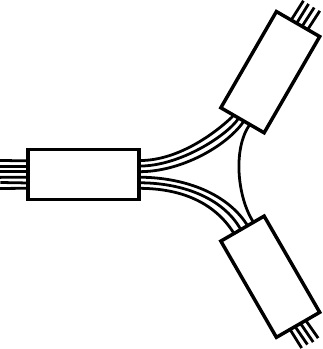}}}

\caption{}
\label{fig:vertex}
\end{figure}

For an $N$--admissible weight system $w\in \mathcal W_\Gamma$, let $\beta_w \in \SSS(H)$ be associated to $w$ as above, and let $b_w \in V_S$ be the image of $\beta_w$ under the map $\Phi_H \colon \SSS(H) \to V_S$ of Lemma~\ref{lem:WRTspaceSpannedBySkeins}. 

\begin{lem}
\label{lem:WRTbasis}
The subset $\mathcal B_\Gamma = \{ b_w \}_{w\in \mathcal W_\Gamma}$ is a basis for the vector space $V_S$.  \qed
\end{lem}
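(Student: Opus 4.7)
The plan is to combine the surjectivity established in Lemma~\ref{lem:WRTspaceSpannedBySkeins} with the classical Przytycki--Lickorish basis theorem for the skein module of a handlebody, and then cut the resulting spanning set down to $\mathcal B_\Gamma$ using vanishing properties specific to $A$ being a primitive $2N$--root of unity.

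For the spanning half, I would first recall that if $\mathcal W_\Gamma^\infty$ denotes the set of weight systems on $\Gamma$ satisfying only the triangle inequalities and parity conditions at vertices, with \emph{no} upper bound, and if $\beta_w \in \SSS(H)$ is defined as in the text for each such $w$, then $\{\beta_w\}_{w \in \mathcal W_\Gamma^\infty}$ is a vector space basis of $\SSS(H)$. This is proved by expanding an arbitrary cabling of $\Gamma$ into Jones--Wenzl components via the Wenzl recursion. Combining with Lemma~\ref{lem:WRTspaceSpannedBySkeins}, the family $\{b_w\}_{w \in \mathcal W_\Gamma^\infty}$ spans $V_S$. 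I would then establish that $b_w = 0$ whenever $w \in \mathcal W_\Gamma^\infty \setminus \mathcal W_\Gamma$. The key inputs are the vanishing under $\ZWRT$ of the ``critical'' Jones--Wenzl projector (at level $N-1$ for odd $N$, at level $\tfrac N 2 - 1$ for even $N$), and the analogous vanishing of the theta network at a trivalent vertex when the sum of incident weights exceeds the allowed bound. An offending edge or vertex of the spine is isolated inside a small ball of $H$, and Lemma~\ref{lem:WRTmorphSkeinRel} together with these vanishings forces $\Phi_H(\beta_w) = 0$. After trimming, $\mathcal B_\Gamma$ still spans $V_S$.

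Linear independence is the delicate half, and I would handle it via the Hopf pairing. Let $H'$ be a second copy of the handlebody, with $\partial H' = -S$; gluing $H$ to $H'$ along $S$ yields a closed oriented $3$--manifold, whose WRT invariant applied to framed links sitting in each piece defines a bilinear pairing $V_S \otimes V_{-S} \to \C$. The set $\mathcal W_\Gamma$ also indexes, via a mirror spine inside $H'$, a family $\{b_w^*\}_{w \in \mathcal W_\Gamma} \subset V_{-S}$. A direct calculation reduces $\langle b_w, b_{w'}^*\rangle$ to a product over the edges and vertices of $\Gamma$ of quantum dimensions and theta evaluations, and shows that the resulting matrix is diagonal, with diagonal entries whose nonvanishing at $A$ is exactly guaranteed by the $N$--admissibility inequalities built into $\mathcal W_\Gamma$. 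Nondegeneracy of this diagonal pairing forces the $b_w$ to be linearly independent.

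The main obstacle lies in this final step: one must verify that the admissibility bounds in items~(2) and~(3) are precisely sharp for nonvanishing of the diagonal theta evaluations at a primitive $2N$--root of unity, neither too restrictive nor too loose. These evaluations admit explicit closed formulas in terms of quantum integers and lie at the technical heart of the BHMV construction; rather than reproducing them, I would cite \cite{BHMVTQFT, LickorishBook, TuraevBook}.
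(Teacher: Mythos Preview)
The paper does not prove this lemma: it is stated with a \qed\ and treated as a known fact from \cite{BHMVTQFT, LickorishBook, TuraevBook}. Your proposal is a faithful outline of the argument found in those references, in particular the BHMV approach of spanning via the handlebody skein module and proving linear independence through the diagonal Hopf pairing on the double of $H$.

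One imprecision is worth flagging. You assert that $\{\beta_w\}_{w\in\mathcal W_\Gamma^\infty}$, with $\beta_w$ ``defined as in the text,'' is a basis of $\SSS(H)$. But the text is explicit that the $a$--th Jones--Wenzl idempotent exists only for $0\leq a<N$ (odd $N$) or $0\leq a<\tfrac N2$ (even $N$); for weights above the critical level, $\beta_w$ is simply not defined at this root of unity, so the family you name does not make sense in $\SSS(H)$. The repair is standard and you already have the pieces: take instead the cabling basis $\{[\Gamma^w]\}_{w\in\mathcal W_\Gamma^\infty}$ of $\SSS(H)$, which is defined for all $A$; observe that for admissible $w$ the $\beta_w$ are upper-triangular in the cablings, so $\{\beta_w\}_{w\in\mathcal W_\Gamma}$ spans the same subspace as the low-weight cablings; and then use the vanishing of the critical projector in $V_S$ (together with the recursion $x\cdot\mathrm{JW}_a = \mathrm{JW}_{a+1} + (\text{lower})$) to reduce any high-weight cabling to the span of $\{b_w\}_{w\in\mathcal W_\Gamma}$ after applying $\Phi_H$. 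With that adjustment the spanning argument is correct, and your linear-independence step via the diagonal pairing with nonvanishing theta evaluations is exactly the one in \cite{BHMVTQFT}.
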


The weight system space $\mathcal W_\Gamma$ contains a special element $0$, assigning weight $0$ to each edge of $\Gamma$. This provides a prefered element $b_0\in V_S$, represented by the empty skein $[\varnothing ]\in \SSS(H)$. By definition, $b_0$ is the \emph{vacuum element} of $V_S$.

\section{The Witten-Reshetikhin-Turaev representation is irreducible}

I the rest of the article, $S$ will always denote a connected closed oriented surface. 
Consider the Witten-Reshetikhin-Turaev representation  $\rho \colon \SSS(S) \to \End(V_S)$ of Lemma~\ref{lem:WRTrepExists}.

\begin{thm}
\label{thm:WRTirred}
The Witten-Reshetikhin-Turaev representation  $\rho \colon \SSS(S) \to \End(V_S)$ is irreducible. 
\end{thm}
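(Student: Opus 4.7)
The plan is to combine three ingredients: cyclicity of $V_S$ under $\rho(\SSS(S))$ with the vacuum $b_0$ as cyclic vector; a joint spectral decomposition of $V_S$ using commuting pants-curve operators; and shift operators from dual curves connecting the resulting eigenspaces. Cyclicity will realize every $b_w$ from $b_0$; the spectral step will force any non-zero $\rho$-invariant subspace to contain some $b_w$; the shift operators will then show that a single $b_w$ generates all of $V_S$.

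For cyclicity, observe that by placing the spine $\Gamma$ near $\Sigma \times \{0\} \subset \partial H$ in the handlebody $H = \Sigma \times [0,1]$, each skein $\beta_w \in \SSS(H)$ can be realized inside a thin collar $C \cong S \times [0,\epsilon]$ of $\partial H = S$ in $H$. Identifying $C$ with a thickened surface makes $\beta_w$ into an element $\tilde\beta_w \in \SSS(S)$, and the TQFT gluing decomposition of $(H, \beta_w) \colon \varnothing \to S$ as $(C, \tilde\beta_w) \circ (H \setminus C, \varnothing)$ yields
$$
b_w \;=\; \Phi_H(\beta_w) \;=\; \rho(\tilde\beta_w) \cdot b_0,
$$
using that $H \setminus C \cong H$ with empty skein contributes the vacuum. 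Since the $b_w$ span $V_S$ by Lemma~\ref{lem:WRTbasis}, this gives $\rho(\SSS(S)) \cdot b_0 = V_S$.

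For the spectral decomposition, I would choose $\Gamma$ so that its edges $e_1, \dots, e_E$ are dual to the curves $\gamma_1, \dots, \gamma_E \subset S$ of a pants decomposition of $S$. The $\gamma_i$ are pairwise disjoint and therefore commute in $\SSS(S)$, and the standard Jones-Wenzl encirclement computation gives $\rho(\gamma_i) \, b_w = \mu_{w(e_i)} \, b_w$ with $\mu_k = (-1)^k [k+1]_A$. For $N$ odd, a parity argument shows these $\mu_k$ are pairwise distinct across the admissible range of weights, so the joint spectrum of $\{\rho(\gamma_i)\}$ separates weight systems: every simultaneous eigenspace is the line $\C \, b_w$, and any $\rho$-invariant subspace $W \subset V_S$ is thus the span of a subset of $\{b_w\}$ and contains at least one basis vector. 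In the $N$ even case the $\mu_k$ may coincide, and one must supplement the family with colored cables of the $\gamma_i$ to refine the spectrum enough to isolate individual basis vectors.

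For the third ingredient, which I expect to be the main obstacle, introduce for each edge $e_i$ a simple closed curve $\delta_i \subset S$ meeting $\gamma_i$ transversely once and disjoint from the other $\gamma_j$. An explicit recoupling calculation inside the two pants sharing $\gamma_i$ expresses $\rho(\delta_i) \, b_w$ as a linear combination of vectors $b_{w'}$ with $w'$ agreeing with $w$ off $e_i$ and $w'(e_i) = w(e_i) \pm 2$, with coefficients that are non-vanishing quantum $6j$-symbols. Composing with spectral projectors built as polynomials in the $\rho(\gamma_j)$ yields elements of $\SSS(S)$ that send $b_w$ to a non-zero multiple of $b_{w'}$ for every such adjacent pair, and iterating along a path in the admissibility polytope gives $\rho(\SSS(S)) \cdot b_w = V_S$ for every $w$, forcing $W = V_S$. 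The delicate inputs are the non-vanishing of the relevant $6j$-symbol coefficients at the root of unity $A$, a $U_q(\mathfrak{sl}_2)$ recoupling identity, together with the combinatorial connectivity of $\mathcal W_\Gamma$ under the elementary $\pm 2$ edge moves within the triangle-inequality region; both are standard in the WRT literature but require careful verification.
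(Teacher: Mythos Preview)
Your outline shares two of its three pillars with the paper's proof: the cyclicity of the vacuum (your first step) is exactly how the paper finishes once $b_0\in W$ is established, and the spectral separation by pants-curve eigenvalues (your second step) is the content of the paper's Lemma~\ref{lem:InvSpaceContainsBasis}. Incidentally, the paper checks that the eigenvalues $-(A^{2(k+1)}+A^{-2(k+1)})$ are already pairwise distinct over the full admissible range for \emph{both} parities of $N$, so the colored cables you propose for the even case are unnecessary.

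Where you diverge is the third step. The paper does not use dual-curve shift operators on a fixed spine; instead it \emph{changes the spine} by Flip Moves and tracks $b_w$ through the resulting change of basis. The minimal-weight term in each Flip Relation carries a $6j$--coefficient that the Masbaum--Vogel formula expresses as a single product of nonzero quantum integers; reapplying the spectral lemma to the new (partial) spine isolates that term and strictly lowers a lexicographic complexity, eventually forcing $b_0\in W$.

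Your alternative has a topological gap. A simple closed curve $\delta_i$ meeting $\gamma_i$ exactly once and disjoint from every other $\gamma_j$ exists only when the two sides of $\gamma_i$ lie in the \emph{same} pair of pants, i.e.\ when the edge $e_i$ is a loop of $\Gamma$. For a non-loop edge, any arc crossing $\gamma_i$ into the adjacent pair of pants must exit that pair of pants through some other $\gamma_j$, so no such $\delta_i$ exists. Since a trivalent graph on $2g-2$ vertices has at most $2g-2$ loop edges but $3g-3$ edges in total, every pants decomposition of a closed surface of genus $g\geq 2$ has at least $g-1$ edges with no dual curve of the kind you describe. The repair is routine---for non-loop edges take the curve in the ambient four-holed sphere meeting $\gamma_i$ twice, whose action is again governed by the fusion calculus and still shifts $w(e_i)$---but once repaired the argument rests on exactly the $6j$--nonvanishing that drives the paper's Flip-Move computation, so the two approaches converge in their technical core.
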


\begin{proof}
We will split the proof into several steps. 
Let $W \subset V_S$ be a non-trivial linear subspace that is invariant under the image $\rho \bigl( \SSS(S) \bigl)$. We want to show that $W$ is equal to the whole space $V_S$. 
For this, we will use a handlebody $H\cong \Sigma\times[0,1]$ bounding the surface $S$, a trivalent spine $\Gamma$ for the surface $\Sigma$, and the basis $\mathcal B_\Gamma = \{ b_w \}_{w\in \mathcal W_\Gamma}$ of Lemma~\ref{lem:WRTbasis}. 

By hypothesis, $W$ is non-trivial, and therefore contains a non-trivial element $\sum_{w\in \mathcal W_\Gamma} \alpha_w b_w$ with $\alpha_w \in \C$.
Our first step is   borrowed from  \cite{RobertsIrred} and \cite{RobertsSkeinMCG}.

\begin{lem}
\label{lem:InvSpaceContainsBasis}
Let $\Gamma$ be a trivalent spine for the surface $\Sigma$. 
If $\sum_{w\in \mathcal W_\Gamma} \alpha_w b_w$ is in the invariant subspace $W$, then every basis element $b_w \in \mathcal B_\Gamma$ with non-zero coefficient $\alpha_w\neq 0$ also belongs to $W$. 
\end{lem}
\begin{proof}
[Proof of Lemma~\ref{lem:InvSpaceContainsBasis}]
For every edge $e$ of $\Gamma$, there exists a disk $D_e \subset H$ such that $D_e \cap \partial H$ is equal to the boundary $\partial D_e$, and such that $D\cap \Gamma = D\cap e$ consists of a single point; this is an immediate consequence of the fact that  the handlebody $H$ deformation retracts to the graph $\Gamma$. Consider $[\partial D_e] \in \SSS(S)$ and its image $\rho \bigl( [ \partial D_e] \bigr) \in \End(V_S)$. For every basis element $b_w\in \mathcal B_\Gamma$ associated to the weight system $w\in \mathcal W_\Gamma$, a computation as in \cite[Lemma~14.2]{LickorishBook} shows that 
$$
\rho\bigl( [\partial D_e] \bigr) (b_w) = - (A^{2(w(e)+1)} + A^{-2(w(e)+1)}) b_w. 
$$

As $A$ is a primitive $2N$--root of unity, the numbers $A^{2(i+1)}+A^{-2(i+1)}$ are non-zero and distinct as $i$ ranges over all admissible $i$, that is, over all $i \in \{0,2,4,\dots, N-3\}$ when $N$ is odd and over all $i \in \{ 0, 1, 2, \ldots, \frac N2 -2\}$ when $N$ is even.

After these observations, the lemma is just a matter of elementary linear algebra. 
Consider an element $v=\sum_{w\in \mathcal W_\Gamma} \alpha_w b_w$   of $W$ such that $\alpha_{w_1}\neq 0$. We want to show that $b_{w_1}$ also belongs to $W$

If all the other coefficients $\alpha_w$ where $w\neq w_1$ are equal to 0, then $b_{w_1} = \frac 1 {\alpha_{w_1}} v\in W$ and we are done. 

Otherwise,  there exists another weight system $w_2\neq w_1$ with $\alpha_{w_2}\neq 0$. The fact that $w_2\neq w_1$ in $\mathcal W_\Gamma$ means that there exists an edge $e$ of $\Gamma$ such that $w_2(e)\neq w_1(e)$. Then the invariant subspace $W$ also contains the element
\begin{align*}
v' &= \bigl(A^{2(w_2(e)+1)} + A^{-2(w_2(e)+1)}\bigr) v  + \rho\bigl( [\partial D_e] \bigr) (v) \\
&= \sum_{w\in \mathcal W_\Gamma} \bigl (A^{2(w_2(e)+1)} + A^{-2(w_2(e)+1)} -A^{2(w(e)+1)} - A^{-2(w(e)+1)}\bigr)  \alpha_w b_w
\end{align*}
Note that, in the basis $\mathcal B_\Gamma = \{ b_w\}_{w\in \mathcal W_\Gamma}$, the coordinate of $v'$ corresponding to $b_{w_1}$ is still non-zero since $w_2(e)\neq w_1(e)$, but that $v'$  now has one fewer non-zero coordinate than $v$ (because the coordinate of $v'$corresponding to $b_{w_2}$ is equal to  0). We can therefore replace $v$ by $v'\in W$, which is simpler. 

Iterating this construction, we eventually reach an element of $W$ that has exactly one non-zero coordinate, corresponding to $b_{w_1}$. This proves that $b_{w_1}$ belongs to $W$, as required.
\end{proof}

We will need to extend  Lemma~\ref{lem:InvSpaceContainsBasis} to a slightly more general framework. Let a \emph{partial spine} for the surface $\Sigma$ be the union $\Gamma$ of a finite family of disjoint trivalent graphs and simple closed curves in $\Sigma$, such that each component of $\Sigma - \Gamma$ contains at least one component of the boundary $\partial \Sigma$. This condition guarantees that $\Gamma$ can be enlarged to a trivalent spine $\hat \Gamma$ for $\Sigma$, by adding a few vertices and edges. 

The notion of an $N$--admissible weight system straightforwardly extends to partial spines: such a weight system consists of an $N$--admissible edge weight system on each trivalent graph component of $\Gamma$; and it assigns to each closed curve component $C$ of $\Gamma$ an even weight  $w(C)\in\{0,2,4, \dots, N-3\}$ if $N$ is odd, or a weight $w(C)\in\{0,1,2, \dots, \frac N2- 2 \}$ if $N$ is even. Again, plugging Jones-Wenzl idempotents into the edges and closed curve components of $\Gamma$ associates an element $\beta_w \in \SSS(H)$ to each $N$--admissible weight system $w\in \mathcal W_\Gamma$. As before,  we denote by $b_w=\Phi_H(\beta_w) \in V_S$  the image of $\beta_w$ under the map $\Phi_H \colon \SSS(H) \to V_S$ of Lemma~\ref{lem:WRTspaceSpannedBySkeins}, and we define $\mathcal B_\Gamma = \{b_w\}_{w\in \mathcal W_\Gamma}$. The only major difference is that $\mathcal B_\Gamma$ does not necessarily generate the Witten-Reshetikhin-Turaev space $V_S$.

\begin{lem}
\label{lem:InvSpaceContainsBasisPartialSpine}
The statement of Lemma~{\upshape\ref{lem:InvSpaceContainsBasis}} also holds when $\Gamma$ is only a partial spine for $\Sigma$. Namely, if $\Gamma$ is a partial spine for $\Sigma$ and if  $\sum_{w\in \mathcal W_\Gamma} \alpha_w b_w$ belongs to the invariant subspace $W$, then every  element $b_w \in \mathcal B_\Gamma$ with non-zero coefficient $\alpha_w\neq 0$ also belongs to $W$. 
\end{lem}

\begin{proof}
Enlarge the partial spine $\Gamma$ to a trivalent spine $\hat \Gamma$ for $\Sigma$, by adding vertices inside of the edges and closed curve components of $\Gamma$ and then adding edges connecting these new vertices as necessary. A weight system $w\in \mathcal W_\Gamma$  determines an $N$--admissible edge weight system for $\hat \Gamma$ as follows: it assigns to each edge of $\hat\Gamma$ that is contained in $\Gamma$ the $w$--weight of the edge or closed curve component of $\Gamma$ that contains it; and it assigns  weight 0 to each of the new edges of $\hat \Gamma - \Gamma$. This defines an inclusion $\mathcal W_\Gamma \subset \mathcal W_{\hat \Gamma}$, and we will use the same letter $w$ to denote the original  $w\in \mathcal W_\Gamma$ and the edge weight system $w\in \mathcal W_{\hat \Gamma}$ for $\hat \Gamma$  that it defines. 

We saw that a weight system $w\in \mathcal W_\Gamma$ for $\Gamma$ determines an element $\beta_w \in \SSS(H)$. Similarly, weighing the edges of $\hat \Gamma$ with $w \in \mathcal W_{\hat \Gamma}$ defines another element $\hat \beta_w \in \SSS(H)$. It turns out that $\hat \beta_w=\beta_w$. Indeed, this immediately follows from the idempotent property 
\SetLabels
\E( .3* .5) $a $ \\
\E( .72* .5) $a $ \\
( * ) $ $ \\
( * ) $ $ \\
( * ) $ $ \\
\endSetLabels
\raisebox{-5pt}{\AffixLabels{\includegraphics{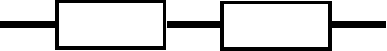}}}  
$=$ 
\SetLabels
\E( .5* .5) $a $ \\
( * ) $ $ \\
( * ) $ $ \\
( * ) $ $ \\
( * ) $ $ \\
\endSetLabels
\raisebox{-5pt}{\AffixLabels{\includegraphics{JW.pdf}}} 
 of Jones-Wenzl idempotents, which shows that adding vertices inside of the edges and closed curve components of $\Gamma$ does not change the associated element of $\SSS(H)$; it is immediate that adding weight 0 edges also has no impact. As a consequence, the inclusion $\mathcal W_\Gamma \subset \mathcal W_{\hat \Gamma}$ induces an inclusion $\mathcal B_\Gamma \subset \mathcal B_{\hat \Gamma} \subset V_S$. 

With this observation, every element $\sum_{w\in \mathcal W_\Gamma} \alpha_w b_w \in V_S $ can also be written as $\sum_{w\in \mathcal W_{\hat \Gamma}} \hat\alpha_w b_w$, by setting $\hat\alpha_w = \alpha_w$ when $w\in \mathcal W_{ \Gamma}\subset \mathcal W_{\hat\Gamma}$ and $\hat\alpha_w = 0$ when $w\in \mathcal W_{\hat \Gamma}-\mathcal W_\Gamma$. Lemma~\ref{lem:InvSpaceContainsBasisPartialSpine} then immediately follows by applying Lemma~\ref{lem:InvSpaceContainsBasis} to the trivalent spine $\hat \Gamma$. 
\end{proof}

 Lemma~\ref{lem:InvSpaceContainsBasis} shows that the invariant subspace $W$ contains at least one  element $b_w\in \mathcal B_\Gamma$ associated to a weight system $w\in \mathcal W_\Gamma$ for a trivalent spine $\Gamma$. Our next goal is to show that $W$ contains the vacuum element $b_0$ corresponding to the zero weight system $0\in \mathcal W_\Gamma$ for all partial spines $\Gamma$. The following definition is designed to  measure progress in this direction.

 The \emph{complexity} of a weight system $w\in \mathcal W_\Gamma$ for a partial spine $\Gamma$  is defined as the triple
 $$
  |w| = \bigl( e(\Gamma), \max(w), n_{\max}(w) \bigr) \in \N^3
 $$ 
 where $e(\Gamma)$ is the number of edges of $\Gamma$, $\max(w)$ is the largest weight assigned by $w$ to the edges and closed curve components of $\Gamma$, and where $n_{\max}(w)$ is the number of edges and closed curve components where this maximum is attained (and where $\N$ denotes the set of non-negative integers). We endow $\N^3$ with the lexicographic order.

\begin{lem}
\label{lem:DecreaseComplexity}
If the invariant subspace $W$ contains an element $b_w\in \mathcal B_\Gamma$ associated to a non-zero weight system $w\in \mathcal W_\Gamma$ for a partial spine $\Gamma$, then $W$ contains another element $b_{w'} \in \mathcal B_{\Gamma'}$, represented by a partial spine $\Gamma'$ and a weight system $w' \in \mathcal W_{\Gamma'}$, such that  $|w'|<|w|$. 
\end{lem}

\begin{proof}
We distinguish cases.

\medskip
\noindent\textsc{Case 1:} \emph{ The weight system $w$ assigns weight $0$ to an edge $e$ of $\Gamma$.}
The admissibility properties of $w$ imply that, if the endpoints of $e$ are not distinct and  correspond to the same vertex of $\Gamma$,  the third edge $e'$ emanating from this vertex has weight $w(e')$ equal to 0. Replacing $e$ by $e'$ if necessary, we can therefore assume that the endpoints of $e$ are distinct. The admissibility condition then shows that, at each of these endpoints, the two other adjacent edges have the same $w$--weight. Let $\Gamma'$ be the partial spine obtained from $\Gamma$ by removing $e$, and combining the edges that meet at each of its end vertices. By the above observation, $w$ induces a weight system $w'\in \mathcal W_{\Gamma'}$, and as in the proof of Lemma~\ref{lem:InvSpaceContainsBasisPartialSpine}, they are represented by the same basis element in $V_S$. By construction, $\Gamma'$ has one fewer edge than $\Gamma$, so that $|w'|<|w|$.

\begin{figure}[htbp]

\centerline{
$\displaystyle
\SetLabels
\E( .5* .5) \rotatebox{0}{\scalebox{.60}{$ w(e_0) $}} \\
( .135 *.85 )  \rotatebox{-60}{\scalebox{.60}{$ w(e_2) $}} \\
( .865* .625) \rotatebox{60}{\scalebox{.60}{$ w(e_3) $}} \\
( .13* .13)  \rotatebox{60}{\scalebox{.60}{$ w(e_1) $}} \\
( .87*.35 )  \rotatebox{-60}{\scalebox{.60}{$ w(e_4) $}}\\
\endSetLabels
\raisebox{-30pt}{\AffixLabels{{\includegraphics{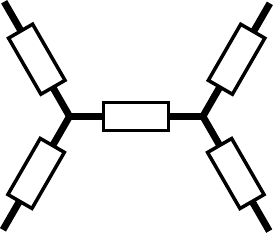}}}}
=  \quad  \sum_{w'} \  \begin{Bmatrix} w(e_1) &w(e_2) & w'(e_0') \\ w(e_3) &w(e_4) & w(e_0)  \end{Bmatrix} \ 
\SetLabels
( .5* .4) \rotatebox{90}{\scalebox{.55}{$ w'(e_0') $}} \\
( .25* .9)  \rotatebox{-30}{\scalebox{.60}{$ w(e_2) $}} \\
( .75*.8 ) \rotatebox{30}{\scalebox{.60}{$ w(e_3) $}} \\
( .26* .07)  \rotatebox{30}{\scalebox{.60}{$ w(e_1) $}} \\
( .75*.16 )  \rotatebox{-30}{\scalebox{.60}{$ w(e_4) $}} \\
\endSetLabels
\raisebox{-36pt}{\AffixLabels{\rotatebox{90}{\includegraphics{Flip.pdf}}}}
$
}
\caption{The Flip Relation}
\label{fig:Flip}
\end{figure}

\medskip
\noindent\textsc{Case 2:} \emph{No edge weight is $0$, and the maximum weight $\max(w)$ is attained on an edge $e_0$ of $\Gamma$ whose endpoints are distinct.}

We apply to $\Gamma$ the classical Flip Relation represented in Figure~\ref{fig:Flip}.  The Flip Move replaces $\Gamma$ by a new partial spine $\Gamma'$ that differs from $\Gamma$  only in the edge $e_0$, and replaces $e_0$ by an edge $e_0'$ that connects differently the four edges meeting $e_0$.  The Flip Relation 
$$
\beta_w = \sum_{w'} \begin{Bmatrix} w(e_1) &w(e_2) & w'(e_0') \\ w(e_3) &w(e_4) & w(e_0)  \end{Bmatrix} \beta_{w'} 
$$
expresses the element $\beta_w \in \SSS(H)$ represented by  $w\in \mathcal W_{\Gamma}$ as a linear combination of  elements $\beta_{w'}\in \SSS(H)$ where $w'$ ranges over all $N$--admissible weight systems for $\Gamma'$ that coincide with $w$ over all edges that are common to $\Gamma$ and $\Gamma'$. 

A key feature of this relation are the coefficients $ \begin{Bmatrix} w(e_1) &w(e_2) & w'(e_0') \\ w(e_3) &w(e_4) & w(e_0)  \end{Bmatrix}\in \C$, known as $6j$--symbols. A precise computation of these $6j$--symbols can be found in \cite{KauffmanLins} or \cite{MasbaumVogel}. The corresponding formula is usually complicated, and expresses a $6j$--symbol as a sum of several terms, each of which is a product of quantum integers and their inverses. However, it is somewhat simpler for the ``smallest'' of the $N$--admissible weight systems $w'$ that are compatible with $w$. 

Consider the weight system $w_1'$ that  coincides with $w$ on $\Gamma-e_0 = \Gamma'-e_0'$ and  assigns weight 
$$w_1'(e_0') = \max \bigl\{ | w(e_1) - w(e_4)|, |w(e_2) - w(e_3)|\bigr \}$$ 
to the edge $e_0'$. The formula is specially designed so that $w_1'$ is $N$--admissible. In fact, $w_1'$ is the edge weight that minimizes the weight $w'(e_0')$ among all $N$--admissible weight systems $w'\in \mathcal W_{\Gamma'}$ that coincide with $w$ outside of $e_0'$. We will not need this minimizing property, but  the following other feature of $w'$ is critical for our purposes:  for this specific weight system  $w_1'\in \mathcal W_{\Gamma'}$,  the sum occurring in the formula of \cite{MasbaumVogel} consists of a single term, and expresses the $6j$--symbol  $ \begin{Bmatrix} w(e_1) &w(e_2) & w_1'(e_0') \\ w(e_3) &w(e_4) & w(e_0)  \end{Bmatrix}$ as a product of non-zero quantum integers and their inverses. In particular, this $6j$--symbol is different from 0. 

Remembering that $b_w= \Phi_H(\beta_w)\in \mathcal B_{\Gamma}$ and $b_{w'} = \Phi_H(\beta_{w'})\in \mathcal B_{\Gamma'}$ for  the map $\Phi\colon \SSS(H) \to V_S$ of Lemma~\ref{lem:WRTspaceSpannedBySkeins}, 
$$
b_w = \sum_{w'} \begin{Bmatrix} w(e_1) &w(e_2) & w'(e_0') \\ w(e_3) &w(e_4) & w(e_0)  \end{Bmatrix} b_{w'} 
$$
in the Witten-Reshetikhin-Turaev space $V_S$. By hypothesis, $b_w$ belongs to the invariant subspace $W\subset V_S$. Lemma~\ref{lem:InvSpaceContainsBasisPartialSpine}  then shows that $W$ also contains the element $b_{w_1'} \in \mathcal B_{\Gamma'}$ corresponding to   $w_1'\in \mathcal W_{\Gamma'}$, since its coefficient in the above sum is different from 0.

By our hypothesis that the weights assigned by $w$  to the edges of $\Gamma$ are non-zero and bounded by $w(e_0)=\max(w)$, the weight $w_1'(e_0') $ defined above is strictly less than $\max(w)$. It follows that $w_1'$ has lower complexity $|w_1'|<|w|$ than $w$. 

\medskip
\noindent\textsc{Case 3:} \emph{No edge weight is $0$, and the maximum weight $\max(w)$ is attained on an edge $e_0$ of $\Gamma$ whose endpoints are identified.}

Because the endpoints of $e_0$ are identified, we cannot apply a Flip Move at $e_0$. Instead, we will apply such a move at the remaining edge $e_1$ that is adjacent to the vertex corresponding to the two ends of  $e_0$. This gives a new partial spine $\Gamma'$, obtained from $\Gamma$ by replacing the edge $e_1$ by an edge $e_1'$ as in Figure~\ref{fig:Flip2}. 

\begin{figure}[htbp]

\centerline{
$\displaystyle
\SetLabels
\E( .43* .49) \rotatebox{0}{\scalebox{.60}{$ w(e_1) $}} \\
( .115 *.83)  \rotatebox{-60}{\scalebox{.60}{$ w(e_3) $}} \\
( .11* .13) \rotatebox{60}{\scalebox{.60}{$ w(e_2) $}} \\
\E( .955* .5)  \rotatebox{90}{\scalebox{.60}{$ w(e_0) $}} \\
\endSetLabels
\raisebox{-30pt}{\AffixLabels{{\includegraphics{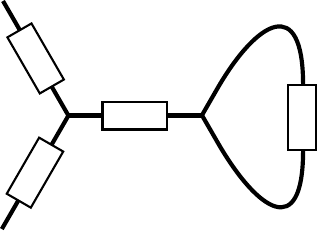}}}}
=  \quad  \sum_{w'} \  \begin{Bmatrix} w(e_2) &w(e_3) & w'(e_1') \\ w(e_0) &w(e_0) & w(e_1)  \end{Bmatrix} \ 
\SetLabels
( .475* .4) \rotatebox{90}{\scalebox{.55}{$ w'(e_1') $}} \\
( .935* .4)  \rotatebox{90}{\scalebox{.60}{$ w(e_0) $}} \\
( .24*.9 ) \rotatebox{-30}{\scalebox{.60}{$ w(e_3) $}} \\
( .25* .06)  \rotatebox{30}{\scalebox{.60}{$ w(e_2) $}} \\
\endSetLabels
\raisebox{-36pt}{\AffixLabels{\rotatebox{0}{\includegraphics{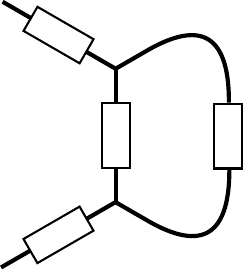}}}}
$
}
\caption{A special case of the Flip Relation}
\label{fig:Flip2}
\end{figure}

We then consider for $\Gamma'$ the $N$--admissible weight system $w_1'$ that assigns weight 
$$w_1'(e_1') = \max \bigl \{ w(e_0)-w(e_2), w(e_0)-w(e_3)\bigr  \} $$
to the edge $e_1'$ and coincides with $w$ outside of $e_1'$. As in Case~2, the formula of \cite{MasbaumVogel} shows that the $6j$--symbol occurring as coefficient of $\beta_{w_1'}$ in the Flip Relation of Figure~\ref{fig:Flip2} is non-zero. An application of Lemma~\ref{lem:InvSpaceContainsBasisPartialSpine} again proves that the invariant subspace $W\subset V_S$ contains the element $b_{w_1'}\in \mathcal B_{\Gamma'}$ associated to $w_1' \in \mathcal W_{\Gamma'}$.

Note that $w_1'(e_1') < w(e_0) = \max (w)$, so that $|w_1'| \leq |w|$. Because the inequality is not necessarily strict, we are not quite done yet. However, we can now apply a Flip Move to $\Gamma'$ at the edge $e_0$, and use Case~2  to conclude.

\medskip
\noindent\textsc{Case 4:} \emph{$N$ is odd, and the maximum weight $\max(w)$ is attained on a closed curve component $C$ of $\Gamma$.}

Push this simple closed curve $C\subset \Sigma$ to  $\Sigma \times\{1\} \subset \partial H =S$ to consider it as a partial spine in a thickening of $S$, and let $[C^{S_2}] \in \SSS(S)$ be defined by assigning weight 2 to this partial spine, namely by plugging a Jones-Wenzl idempotent    \SetLabels
\E( .5* .5) $2 $ \\
( * ) $ $ \\
( * ) $ $ \\
( * ) $ $ \\
( * ) $ $ \\
\endSetLabels
\raisebox{-5pt}{\AffixLabels{\includegraphics{JW.pdf}}} 
in $C$; see Remark~\ref{rem:ChebyWRT}
 to explain the notation.

\begin{figure}[htbp]

$\SetLabels
( .5* .3) \tiny$ a$ \\
(.5 * .08) \tiny $ 2$ \\
( * ) $ $ \\
( * ) $ $ \\
( * ) $ $ \\
\endSetLabels
\raisebox{-17pt}{\AffixLabels{\includegraphics{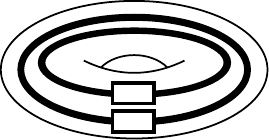}}}
=
\SetLabels
( .5* .19) \tiny$ a\kern -3pt -\kern -3pt 2$ \\
( * ) $ $ \\
( * ) $ $ \\
( * ) $ $ \\
\endSetLabels
\raisebox{-17pt}{\AffixLabels{\includegraphics{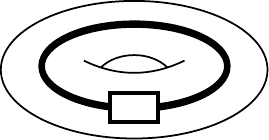}}}
+
\SetLabels
( .5* .19) \tiny$ a$ \\
( * ) $ $ \\
( * ) $ $ \\
( * ) $ $ \\
\endSetLabels
\raisebox{-17pt}{\AffixLabels{\includegraphics{TorusJW.pdf}}}
+
\SetLabels
( .5* .19) \tiny$ a\kern -3pt +\kern -3pt 2$ \\
( * ) $ $ \\
( * ) $ $ \\
( * ) $ $ \\
\endSetLabels
\raisebox{-17pt}{\AffixLabels{\includegraphics{TorusJW.pdf}}}
$
\caption{A multiplication property for Jones-Wenzl idempotents}
\label{fig:TorusJW}
\end{figure}

If the subspace $W\subset V_S$ contains $b_w \in \mathcal B_\Gamma$, it also contains $\rho\bigl( [C^{S_2}] \bigr)(b_w)$ by invariance of $W$ under the action of $\rho\bigl( \SSS(S) \bigr)\subset \End(V_S)$. The relation of Figure~\ref{fig:TorusJW}, valid in a solid torus neighborhood of $C$, enables us to compute this element and gives 
$$
\rho\bigl( [C^{S_2}] \bigr)(b_w) = b_{w'} + b_w +  b_{w''}
$$
where the weight systems $w'$ and $w''$ for $\Gamma$ coincide with $w$ outside of the closed curve component $C$, and respectively assign weight $w(C)-2$ and $w(C) +2$ to $C$. See \cite[Lemma~14.11]{LickorishBook} for a proof of this relation. 

There is a little caveat needed here when $w(C)$ is equal to its maximum possible value $N-3$. Then, $w''$ assigns weight $N-1$ to $C$, and consequently is no longer $N$--admissible. The associated element $\beta_{w''} \in \SSS(H)$ still makes sense, but its image $\Phi_H (\beta_w)\in V_S$ under the map of Lemma~\ref{lem:WRTspaceSpannedBySkeins} is equal to 0. We consequently set $b_{w''}=0$ in this case. 

In all cases, we can apply Lemma~\ref{lem:InvSpaceContainsBasisPartialSpine} to $\rho\bigl( [C^{S_2}] \bigr)(b_w) \in W$, and we conclude that $b_{w'}$ belongs to $W$. By construction, $w'(C) < w(C) = \max(w)$, so that $w'\in \mathcal W_\gamma$ has lower complexity $|w'|<|w|$. This concludes the proof in this case.

\medskip
\noindent\textsc{Case 5:} \emph{$N$ is even, and the maximum weight $\max(w)$ is attained on a closed curve component $C$ of $\Gamma$.}

The proof is almost identical to that of Case~4, except that we do not have to worry about keeping all weights even. Because of this, it suffices to consider the action of the element $[C]\in \SSS(S)$ represented by the closed curve $C$. Then, a computation similar to that of  Figure~\ref{fig:TorusJW} (see again \cite[Lemma~14.11]{LickorishBook}) gives that
$$
\rho\bigl( [C] \bigr)(b_w) = b_{w'} + b_w +  b_{w''}
$$
where the weight systems $w'$ and $w''$ for $\Gamma$ coincide with $w$ outside of the closed curve component $C$, and respectively assign weight $w(C)-1$ and $w(C)+1$ to $C$ (with $b_{w''}=0$ when $w(C) = \frac N2-2$). An application of Lemma~\ref{lem:InvSpaceContainsBasisPartialSpine} then shows that $W$ contains the basis element $b_{w' } \in \mathcal B_\Gamma$. Again, $w'\in \mathcal W_\gamma$ has lower complexity $|w'|<|w|$, and this concludes the proof in this case. 

Since the five cases considered exhaust all possibilities, the proof of Lemma~\ref{lem:DecreaseComplexity} is now complete. 
\end{proof}

We are now almost done with the proof of Theorem~\ref{thm:WRTirred}. 

Recursively applying Lemma~\ref{lem:DecreaseComplexity}, we eventually reach a partial spine $\Gamma'$ such that the invariant subspace $W$ contains the element $b_0 \in \mathcal B_{\Gamma'}$ associated to the trivial weight system $0 \in \mathcal W_{\Gamma'}$. By definition $b_0$ is also the image of the empty skein $[\varnothing] \in \SSS(H)$ under the map $\Phi_H$ of Lemma~\ref{lem:WRTspaceSpannedBySkeins}. 

Let $L$ be a framed link in the handlebody $H$. Push $L$ into a tubular neighborhood of the boundary $\partial H = S$, so that $L$ defines a skein $[L] \in \SSS(S)$. 

By invariance of $W$ under the under the action of $\rho\bigl( \SSS(S) \bigr)\subset \End(V_S)$, it contains the element $\rho\bigl( [L] \bigr)(b_0)$. However, we also have, from the definition of the Witten-Reshetikhin-Turaev homomorphism $\rho$, that
$$
\rho\bigl( [L] \bigr)(b_0) = \rho\bigl( [L] \bigr) \bigl( \Phi_H([\varnothing]) \bigr) = \Phi_H\bigl([L\cup \varnothing] \bigr) =\Phi_H\bigl([L] \bigr) 
$$
where, in the first two terms, $[L]$ denotes the element of $\SSS(S)$ represented by $L$, whereas $[L]$ is the element of $\SSS(H)$ represented by $L$ in the last two terms.

This proves that the invariant subspace $W\subset V_S$ contains the image $\Phi_H\bigl([L] \bigr) $ of every skein $[L] \in \SSS(H)$. Since these skeins generate $\SSS(H)$ and since $\Phi_H \colon \SSS(H) \to V_S$ is surjective by Lemma~\ref{lem:WRTspaceSpannedBySkeins}, this proves that $W$ is equal to the whole space $V_S$. 

This completes the proof of Theorem~\ref{thm:WRTirred}. 
\end{proof}

\section{The classical shadow of the Witten-Reshetikhin-Turaev representation}

\subsection{Threading polynomials along a framed link}

Given a framed (connected) knot $K$ in a 3--dimensional manifold $M$ and  a polynomial
$
P(x) = \sum_{i=0}^n a_i x^i
$, we can consider  the linear combination
$$
[K^P] = \sum_{i=0}^n a_i\, [K^{(i)}] \in \SSS(M)
$$
where, for each $i$, $K^{(i)}$ is the framed link obtained by taking $i$ parallel copies of $K$ in the direction indicated by the framing.
More generally, if $L\subset M$ is a framed link with components $K_1$, $K_2$, \dots, $K_l$, define
$$
[L^P] = \sum_{0\leq i_1, \,i_2,\dots, \,i_l \leq n} a_{i_1} a_{i_2} \dots a_{i_l}\, [K_1^{(i_1)} \cup K_2^{(i_2)} \cup \dots \cup K_l^{(i_l)} ] \in \SSS(M).
$$
By definition,  $[L^P]\in \SSS(M)$ is obtained by \emph{threading the polynomial $P$ along the framed link $L$}. 

We will apply this construction to the (normalized) Chebyshev polynomials of the first and second type. 

The \emph{$n$--th Chebyshev polynomial of the first type} $T_n(x)$ is defined by the properties that $T_n(x) = xT_{n-1}(x) - T_{n-2}(x)$, $T_0(x) =2$ and $T_1(x) =x$. The \emph{$n$--th Chebyshev polynomial of the second type} $S_n(x)$ is defined by the same recurrence relation $S_n(x) = xS_{n-1}(x) - S_{n-2}(x)$, the same  initial condition $S_1(x) =x$, but differs in the other initial condition $S_0(x) =1$. The two types of Chebyshev polynomials are related by the property that $T_n(x) = S_n(x) - S_{n-2}(x)$ for every $n$. 

\begin{rem}
\label{rem:ChebyWRT}
The Chebyshev polynomials of the second type $S_n(x)$  are ubiquitous in the Witten-Reshetikhin-Turaev theory and, more generally, in the representation theory of the quantum group $\mathrm U_q(\mathfrak{sl}_2)$. In particular,  for each framed link $L$ in a 3--manifold $M$,  the element of $\SSS(M)$ obtained by plugging the $n$--th Jones-Wenzl idempotent in each component of $L$ is equal to the element $[L^{S_n}] \in \SSS(M)$. See  \cite[\S 13]{LickorishBook} or \cite[p.~715]{LickorishHandbook}. 
\end{rem}

The following facts, which can for instance be found in Lemma 6.3 of \cite{BHMV3man}, are  crucial for our computations. 

\begin{lem}
\label{lem:ChebyRelations}
Suppose that $A$ is a primitive $2N$--root of unity with $N$ odd, and let $V_S$ be the Witten-Reshetikhin-Turaev space of the surface $S$. 
Let $K$ and $L$ be two disjoint framed links in a $3$--manifold $M$ bounded by $S$. Then, for the homomorphism $\Phi_M \colon \SSS(M ) \to V_S$ of Lemma~{\upshape\ref{lem:WRTspaceSpannedBySkeins}}, 
\begin{enumerate}
\item $\Phi_M \bigl( [K^{S_{N-1}} \cup L] \bigr) = 0$;
\item $\Phi_M \bigl( [K^{S_{N-2-n}} \cup L] \bigr)  = \Phi_M \bigl( [K^{S_{n}} \cup L] \bigr) $ for every integer $n$. \qed
\end{enumerate}
\end{lem}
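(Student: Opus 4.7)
Plan. Since $K^{S_n}$ for a framed link $K=K_1\cup\cdots\cup K_\ell$ decomposes as the union $K_1^{S_n}\cup\cdots\cup K_\ell^{S_n}$ in $\SSS(M)$, it suffices to prove both identities when $K$ is a single framed knot; the general case then follows by applying them iteratively to each component (absorbing the others into $L$). My strategy for the knot case is to reduce to a local computation in a torus neighborhood of $K$, and then to invoke the structure of Jones--Wenzl idempotents at the root of unity.

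Step 1 (reduction to the solid torus). I would pick a solid torus $T\cong S^1\times D^2$ neighborhood of $K$, disjoint from $L$, so that $K$ is a core of $T$, and set $M'=\overline{M\setminus T}$. The decomposition $M=T\cup_{\partial T}M'$ expresses $(M,K^{S_n}\cup L)$ as the composition in $\mathcal C$ of the morphisms $(T,K^{S_n})\colon\varnothing\to\partial T$ and $(M',L)\colon\partial T\to S$. The functoriality of $\ZWRT$ then gives
\[
\Phi_M\bigl([K^{S_n}\cup L]\bigr)=\ZWRT(M',L)\bigl(\Phi_T([K^{S_n}])\bigr),
\]
which reduces the two identities to corresponding statements about the elements $\Phi_T([K^{S_n}])\in V_{\partial T}$, namely $\Phi_T([K^{S_{N-1}}])=0$ and $\Phi_T([K^{S_{N-2-n}}])=\Phi_T([K^{S_n}])$.

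Step 2 (basis of $V_{\partial T}$ and identity (1)). When $N$ is odd, $V_{\partial T}$ admits a basis $\{b_k\}$ indexed by the admissible weights $k\in\{0,2,4,\dots,N-3\}$ on the core $\gamma=K$, with $b_k=\Phi_T([K^{S_k}])$ by Remark~\ref{rem:ChebyWRT}; this is a partial-spine analogue of Lemma~\ref{lem:WRTbasis}, which one can derive by embedding $T$ into a larger handlebody and specializing the trivalent-spine basis. The weight $N-1$ is even but exceeds $N-3$, hence is non-admissible, so the same observation invoked in Case~4 of the proof of Lemma~\ref{lem:DecreaseComplexity} gives $\Phi_T([K^{S_{N-1}}])=0$. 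The underlying reason is the vanishing of the quantum integer $[N]$ at the $2N$-th root of unity $A$, which forces the quantum trace of the $(N-1)$-st Jones--Wenzl idempotent to be zero.

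Step 3 (identity (2), the main obstacle). This identity is subtler and, I expect, the main difficulty of the proof. It is not a formal consequence of (1): in $\SSS(T)\cong\C[x]$ one does not have $S_{N-2-n}(x)\equiv S_n(x)\pmod{S_{N-1}(x)}$ at the polynomial level. Rather, it genuinely uses the $\mathrm{SO}_3$-structure of the WRT theory at $N$ odd, reflecting the fact that the admissible labels $\{0,2,\dots,N-3\}$ index the $\Z/2$-orbits of $\{0,1,\dots,N-2\}$ under the involution $n\mapsto N-2-n$. One concrete route is to iterate the fusion relation of \cite[Lemma~14.11]{LickorishBook} (already used in Case~4 of the proof of Lemma~\ref{lem:DecreaseComplexity}) together with the Chebyshev recurrence $S_m=xS_{m-1}-S_{m-2}$, inductively reducing $\Phi_T([K^{S_{N-2-n}}])$ to an expression in the basis $\{b_k\}$ and then checking by induction on $n$ that this expression is exactly $b_n$. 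Alternatively, both (1) and (2) can be quoted from \cite[Lemma~6.3]{BHMV3man}, where they are established in the equivalent BHMV framework.
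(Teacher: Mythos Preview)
The paper does not prove this lemma: it is stated without proof and attributed in the preceding sentence to \cite[Lemma~6.3]{BHMV3man}. Your closing fallback---quoting that same reference---therefore coincides exactly with what the paper does, and that alone would suffice here.

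Your sketch goes further than the paper, and Steps~1 and~2 are sound: the reduction to the solid torus via TQFT functoriality is standard, and the vanishing $\Phi_T\bigl([K^{S_{N-1}}]\bigr)=0$ does follow from $[N]=0$ together with the Hopf-pairing description of $V_{\partial T}$. Step~3, however, has a real gap. The inductive route you propose, combining the Chebyshev recurrence with identity~(1), cannot by itself produce identity~(2): in $\C[x]/\bigl(S_{N-1}(x)\bigr)$ the classes of $S_0,\dots,S_{N-2}$ remain linearly independent, so the single relation $\Phi_T\bigl([K^{S_{N-1}}]\bigr)=0$ together with the three-term recurrence imposes no identification between $\Phi_T\bigl([K^{S_n}]\bigr)$ and $\Phi_T\bigl([K^{S_{N-2-n}}]\bigr)$. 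You correctly diagnose that the missing input is the $\mathrm{SO}_3$ quotient structure; in \cite{BHMV3man} it is supplied not by induction but by the Hopf pairing on the torus: one checks directly that $\langle e_{N-2-n},e_j\rangle=(-1)^{N-1}\langle e_n,e_j\rangle$ for every $j$, so for $N$ odd the difference $e_{N-2-n}-e_n$ lies in the radical and hence vanishes in $V_{\partial T}$. That pairing computation, rather than an induction from the fusion rule, is what actually establishes~(2).
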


\subsection{The classical shadow of the Witten-Reshetikhin-Turaev representation}
As usual, $S$ is a connected closed oriented surface. 
Consider the character variety
$$
\RR_{\SL(\C)}(S) = \{ \text{homomorphisms } r \colon \pi_1(S) \to \SL(\C)\}\db \SL(\C)
$$
where $\SL(\C)$ acts on homomorphisms $\pi_1(S) \to \SL(\C)$ by conjugation, and where the double bar indicates that one takes the quotient in the sense of geometric invariant theory. In practice, this means that two homomorphisms $r$, $r'\colon \pi_1(S) \to \SL(\C)$ represent the same point of $\RR_{\SL(\C)}(S)$ if and only if they induce the same trace functions, namely if and only if $\Tr\,r(\gamma) = \Tr\,r'(\gamma) $ for every $\gamma \in \pi_1(S)$. 


\begin{thm}[\cite{BonWonSkeinReps1}]
\label{thm:ClassicShadow}
Suppose that $A$ is a primitive $2N$--root of unity with $N$ odd. If $\rho\colon \SSS(S) \to \End(V)$ is an irreducible representation of the skein algebra $\SSS(S)$, then there exists a unique character $r _\rho\in \RR_{\SL(\C)}(S)$ such that
$$
\rho \bigl( [K^{T_N}] \bigr) = - \Tr\,r_\rho(K)\,\Id_V
$$
for every framed knot $K \subset S \times [0,1]$. \qed
\end{thm}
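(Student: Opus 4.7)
The plan is to combine three ingredients: (a) the centrality of the Chebyshev-threaded knot $[K^{T_N}]$ in the skein algebra $\SSS(S)$; (b) Schur's lemma; and (c) the classical identification, due to Bullock and to Przytycki-Sikora \cite{PrzSik}, of the Kauffman skein algebra at parameter $-1$ with the coordinate ring of the character variety $\RR_{\SL(\C)}(S)$.

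First I would establish that $[K^{T_N}]$ is central in $\SSS(S)$ for every framed knot $K$. The essential input is the identity $T_N(x) = S_N(x) - S_{N-2}(x)$, combined with the properties of the Jones-Wenzl idempotents at a primitive $2N$-th root of unity with $N$ odd (the $V_S$-version of which is recorded in Lemma~\ref{lem:ChebyRelations}). One checks locally, in a ball where $K$ crosses another strand, that threading $T_N$ along $K$ equalizes the two Kauffman resolutions of the crossing: the ``miraculous cancellation'' phenomenon at a root of unity. Globally, this local equality implies that multiplication by $[K^{T_N}]$ commutes with pushing any framed link past $K$, so $[K^{T_N}]$ lies in the center of $\SSS(S)$.

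With centrality in hand, Schur's lemma applied to the irreducible representation $\rho$ produces a scalar $\tau(K) \in \C$ with $\rho([K^{T_N}]) = -\tau(K)\,\Id_V$. The next step is to verify that $K \mapsto \tau(K)$ satisfies the Fricke trace identities
$$
\tau(K_1)\,\tau(K_2) = \tau(K_1 \cdot K_2) + \tau(K_1 \cdot K_2^{-1}), \qquad \tau(\mathrm{unknot}) = 2,
$$
for framed knots meeting transversely in a single point of $S$, where the two factors on the right are the two local smoothings of the intersection. These identities are consequences of the Kauffman skein relation applied after threading $T_N$: the threading converts the skein relation at the quantum parameter $A$ into the commutative skein relation at parameter $-1$, which is precisely the feature that the first-type Chebyshev polynomial was designed to produce.

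Finally, by the Bullock/Przytycki-Sikora theorem, the Kauffman skein algebra at parameter $-1$ is, modulo its nilradical, canonically isomorphic to $\C[\RR_{\SL(\C)}(S)]$ via the rule sending the class of a knot $[K]$ to minus the trace function $r \mapsto \Tr\,r(K)$. The Fricke identities verified above show that $K \mapsto -\tau(K)$ extends to an algebra homomorphism from this skein algebra to $\C$, hence coincides with evaluation at a unique character $r_\rho \in \RR_{\SL(\C)}(S)$, yielding $\Tr\,r_\rho(K) = \tau(K)$ as required. The main obstacle is the first step: verifying the miraculous cancellation that produces centrality of $[K^{T_N}]$ and simultaneously descends the quantum skein relation to its classical commutative analogue. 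This is the Chebyshev-Frobenius phenomenon, and it is the technical heart of \cite{BonWonSkeinReps1}.
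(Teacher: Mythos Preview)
The paper does not give its own proof of this theorem: the statement is simply quoted from \cite{BonWonSkeinReps1} and marked with a \qed. There is therefore nothing in the present paper to compare your argument against.

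That said, your outline is a faithful summary of the strategy actually carried out in \cite{BonWonSkeinReps1} (and, independently, in \cite{Le}): one proves that threading $T_N$ along knots defines an algebra homomorphism from the commutative skein algebra $\mathcal S^{-1}(S)$ into the center of $\SSS(S)$; Schur's lemma then turns $\rho$ into a character of that commutative algebra; and the Bullock/Przytycki--Sikora identification converts this into a point of $\RR_{\SL(\C)}(S)$. You have correctly located the hard step, namely the ``miraculous cancellation'' that simultaneously establishes centrality and shows that the quantum skein relation descends to the classical one at parameter $-1$.

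One small caution: your parenthetical pointer to Lemma~\ref{lem:ChebyRelations} is a bit misleading. That lemma records identities that hold only after applying $\Phi_M$ into the Witten--Reshetikhin--Turaev space $V_S$; the centrality of $[K^{T_N}]$ in $\SSS(S)$ is a strictly stronger statement that must hold in the skein algebra itself, before passing to any representation. The proof of that centrality is a direct combinatorial computation with Chebyshev polynomials and crossing resolutions, and does not reduce to the TQFT relations of Lemma~\ref{lem:ChebyRelations}. Also, for the last step you implicitly need that $\mathcal S^{-1}(S)$ is reduced (has trivial nilradical), so that an algebra homomorphism to $\C$ really corresponds to a point of the character variety; this is known for closed surfaces but is worth flagging as a nontrivial input.
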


By definition, $r _\rho\in \RR_{\SL(\C)}(S)$ is the \emph{classical shadow} of the representation $\rho\colon \SSS(S) \to \End(V)$. See also \cite{Le} for an alternative approach to the key properties underlying this statement. 

\begin{thm}
\label{thm:ClassiShadowWRT}
When $A$ is a primitive $2N$--root of unity with $N$ odd, the classical shadow of the Witten-Reshetikhin-Turaev representation $\rho\colon \SSS(S) \to \End(V_S)$ is the trivial character $\iota \in  \RR_{\SL(\C)}(S)$, represented by the trivial homomorphism $\pi_1(S) \to \SL(\C)$. 
\end{thm}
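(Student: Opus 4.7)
The plan is to combine Theorem~\ref{thm:WRTirred} with Theorem~\ref{thm:ClassicShadow} to reduce the statement to a single scalar computation. By irreducibility of $\rho$ (Theorem~\ref{thm:WRTirred}), Theorem~\ref{thm:ClassicShadow} provides a character $r_\rho \in \RR_{\SL(\C)}(S)$ with $\rho\bigl([K^{T_N}]\bigr) = -\Tr\,r_\rho(K)\,\Id_{V_S}$ for every framed knot $K \subset S \times [0,1]$. Since points of $\RR_{\SL(\C)}(S)$ are determined by trace functions on conjugacy classes in $\pi_1(S)$, and since $\Tr\,\iota(\gamma)=2$ for the trivial character $\iota$, it is enough to show that $\rho\bigl([K^{T_N}]\bigr) = -2\,\Id_{V_S}$ for every framed knot $K$.

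The next step is to evaluate this scalar operator on a convenient test vector. Choose a handlebody $H$ with $\partial H = S$ and let $b_0 = \Phi_H([\varnothing]) \in V_S$ be the vacuum element. Composing the morphisms $(H,\varnothing)\colon \varnothing \to S$ and $(S\times[0,1],K^{T_N})\colon S \to S$ in the category $\mathcal C$ gives
\[
\rho\bigl([K^{T_N}]\bigr)(b_0) = \Phi_H([K^{T_N}]),
\]
where $K$ is now pushed from the collar into $H$. Since $\rho\bigl([K^{T_N}]\bigr)$ is already scalar and $b_0 \neq 0$, the desired equality reduces to $\Phi_H([K^{T_N}]) = -2 b_0$.

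The heart of the argument is a Chebyshev manipulation that feeds directly into Lemma~\ref{lem:ChebyRelations}. Combining $T_N(x) = S_N(x) - S_{N-2}(x)$ with the second-kind recurrence $S_N(x) = xS_{N-1}(x) - S_{N-2}(x)$ yields $T_N(x) = xS_{N-1}(x) - 2S_{N-2}(x)$, so in $\SSS(H)$
\[
[K^{T_N}] = [K^{xS_{N-1}}] - 2[K^{S_{N-2}}].
\]
Lemma~\ref{lem:ChebyRelations}(2) applied with $n=0$ and $S_0 = 1$ gives $\Phi_H([K^{S_{N-2}}]) = \Phi_H([K^{S_0}]) = \Phi_H([\varnothing]) = b_0$. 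For the other term, the factor of $x$ in $xS_{N-1}$ absorbs one extra parallel copy of $K$; concretely, $[K^{xS_{N-1}}] = [K \cup (K')^{S_{N-1}}]$, where $K'$ denotes a framed parallel pushoff of $K$, disjoint from $K$. Lemma~\ref{lem:ChebyRelations}(1), applied to the knot $K'$ with disjoint link $L = K$, then gives $\Phi_H([K^{xS_{N-1}}]) = 0$. Assembling the two contributions yields $\Phi_H([K^{T_N}]) = -2 b_0$, as required.

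The only delicate point in this plan is the topological rewriting $[K^{xS_{N-1}}] = [K \cup (K')^{S_{N-1}}]$, which converts the algebraic operation of multiplying by $x$ into the addition of a framed pushoff; this is precisely the bridge that lets Lemma~\ref{lem:ChebyRelations}(1) kill the first term. All the genuinely hard work of the argument has been absorbed into the known Lemma~\ref{lem:ChebyRelations} (the $N$-th Jones-Wenzl trace-vanishing and symmetry relations) and into Theorem~\ref{thm:ClassicShadow}, so once the Chebyshev identity $T_N = xS_{N-1} - 2S_{N-2}$ is in hand, the main obstacle is essentially bookkeeping.
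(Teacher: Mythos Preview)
Your proof is correct and follows essentially the same route as the paper: the identical Chebyshev identity $T_N = xS_{N-1} - 2S_{N-2}$, the same rewriting of $[K^{xS_{N-1}}]$ via a framed pushoff so that Lemma~\ref{lem:ChebyRelations}(1) kills the first term, and Lemma~\ref{lem:ChebyRelations}(2) to handle the second. The only cosmetic difference is that the paper verifies $\rho\bigl([K^{T_N}]\bigr)(v)=-2v$ on \emph{all} vectors $v=\Phi_M([L])$ and then invokes surjectivity of $\Phi_M$, whereas you first use Theorem~\ref{thm:ClassicShadow} to know the operator is already scalar and then test it on the single vector $b_0$; both are fine.
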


\begin{proof}
This is a relatively simple consequence of Lemma~\ref{lem:ChebyRelations}. Identify $S\times [0,1]$ to a tubular neighborhood of the boundary $S=\partial M$ in the 3--manifold $M$. To compute $\rho \bigl( [K^{T_N}] \bigr) \in \End(V_S)$ for a framed knot $K \subset S \times [0,1]$, Lemma~\ref{lem:WRTspaceSpannedBySkeins} shows that it suffices to consider its action on those elements of $V_S$ of the form $v= \Phi_M \bigl([L] \bigr) $  for a framed link $L\subset M$. Pushing $L$ away from the neighborhood  $S\times [0,1]$ of $S=\partial M$ in $M$,
\begin{align*}
\rho \bigl( [K^{T_N}] \bigr) (v)& =  \Phi_M\bigl( [K^{T_N} \cup L] \bigr) =    \Phi_M\bigl( [K^{xS_{N-1} - 2S_{N-2}} \cup L] \bigr)  \\
&=  \Phi_M\bigl( [K^{xS_{N-1}} \cup L] \bigr)  -  2\Phi_M\bigl( [K^{S_{N-2}} \cup L] \bigr),
\end{align*}
using the property that $T_n(x) = S_n(x) - S_{n-2}(x) = xS_{n-1}(x) -2S_{n-2}(x)$ for every $n$. 

The term $[K^{xS_{N-1}} \cup L] \in \SSS(M)$ is also equal to $[K^{S_{N-1}}\cup K' \cup L] $ where $K'$ is a push-off of $K$ in the direction given by the framing. Its image $\Phi_M \bigl(  [K^{S_{N-1}}\cup K' \cup L]  \bigr)$ in $V_S$ is therefore equal to 0 by Part~(1) of Lemma~\ref{lem:ChebyRelations}. Similarly, Part~(2) of Lemma~\ref{lem:ChebyRelations} shows that
$$
\Phi_M\bigl( [K^{S_{N-2}} \cup L] \bigr) = \Phi_M\bigl( [K^{S_{0}} \cup L] \bigr) = \Phi_M\bigl( [K^{1} \cup L] \bigr)= \Phi_M\bigl( [ L] \bigr) =v.
$$
(Note that, since $1=x^0$, the skein $[K^1]=[K^{(0)}]$ is represented by 0 copies of the knot $K$, and is therefore trivial.)

Therefore, $\rho \bigl( [K^{T_N}] \bigr) (v) = -2v$ for every $v =  \Phi_M\bigl( [ L] \bigr)  \in V_S$ represented by a framed link $L \subset M$. Since these elements generate $V_S$ by Lemma~\ref{lem:WRTspaceSpannedBySkeins}, it follows that $\rho \bigl( [K^{T_N}] \bigr)=-2\,\Id_{V_S}$. 

If $r_\rho$ is the classical shadow of the Witten-Reshetikhin-Turaev representation $\rho\colon \SSS(S) \to \End(V_S)$, this proves that $\Tr\, r_\rho(K)=2$ for every knot $K \subset S\times[0,1]$. This means that $r _\rho\in \RR_{\SL(\C)}(S)$ is the character represented by the trivial homomorphism. 
\end{proof}

\bibliographystyle{amsplain}

 \bibliography{WRTrep}

\end{document}